\newtheorem{theorem}{Theorem}[section]
\newtheorem{corollary}{Corollary}[section]
\newtheorem{example}{Example}[section]
\newtheorem{lemma}[theorem]{Lemma}
\newtheorem{proposition}{Proposition}[section]
\newtheorem{definition}[theorem]{Definition}
\newtheorem{remark}{Remark}[section]
\begin{document}

\begin{frontmatter}

\title{Discrete-Time Fractional Variational Problems}

\author[labelNuno]{Nuno R. O. Bastos}
\ead{nbastos@estv.ipv.pt}

\author[labelRui]{Rui A. C. Ferreira}
\ead{ruiacferreira@ua.pt}

\author[labelDelfim]{Delfim F. M. Torres}
\ead{delfim@ua.pt}

\address[labelNuno]{Department of Mathematics, ESTGV\\
Polytechnic Institute of Viseu\\
3504-510 Viseu, Portugal}

\address[labelRui]{Faculty of Engineering and Natural Sciences\\
Lusophone University of Humanities and Technologies\\
1749-024 Lisbon, Portugal}

\address[labelDelfim]{Department of Mathematics\\
University of Aveiro\\
3810-193 Aveiro, Portugal}

\begin{abstract}
We introduce a discrete-time fractional calculus of variations on
the time scale $h\mathbb{Z}$, $h > 0$. First and second order necessary
optimality conditions are established. Examples illustrating the use
of the new Euler-Lagrange and Legendre type conditions are given.
They show that solutions to the considered fractional problems
become the classical discrete-time solutions when the
fractional order of the discrete-derivatives are integer values,
and that they converge to the fractional continuous-time solutions
when $h$ tends to zero. Our Legendre type condition
is useful to eliminate false candidates identified
via the Euler-Lagrange fractional equation.
\end{abstract}

\begin{keyword}
Fractional difference calculus \sep calculus of variations
\sep fractional summation by parts \sep Euler-Lagrange equation
\sep natural boundary conditions \sep Legendre necessary condition
\sep time scale $h\mathbb{Z}$.

\MSC[2010] 26A33 \sep 39A12 \sep 49K05.

\end{keyword}

\end{frontmatter}


\section{Introduction}
\label{int}

The \emph{Fractional Calculus} (calculus with derivatives of arbitrary order)
is an important research field in several different areas such as physics
(including classical and quantum mechanics as well as thermodynamics),
chemistry, biology, economics, and control theory
\cite{agr3,B:08,Miller1,Ortigueira,TenreiroMachado}.
It has its origin more than 300 years ago when L'Hopital asked Leibniz what
should be the meaning of a derivative of non-integer order. After
that episode several more famous mathematicians contributed to the
development of Fractional Calculus: Abel, Fourier, Liouville,
Riemann, Riesz, just to mention a few names \cite{book:Kilbas,Samko}.
In the last decades, considerable research has been done in fractional calculus.
This is particularly true in the area of the calculus of variations,
which is being subject to intense investigations during the last few years
\cite{B:08,Ozlem,R:N:H:M:B:07,R:T:M:B:07}.
Applications include fractional variational principles in
mechanics and physics, quantization, control theory, and description of conservative,
nonconservative, and constrained systems \cite{B:08,B:M:08,B:M:R:08,R:T:M:B:07}.
Roughly speaking, the classical calculus of variations and optimal control is extended
by substituting the usual derivatives of integer order
by different kinds of fractional (non-integer) derivatives.
It is important to note that the passage from the integer/classical differential calculus
to the fractional one is not unique because we have at our disposal
different notions of fractional derivatives.
This is, as argued in \cite{B:08,R:N:H:M:B:07},
an interesting and advantage feature of the area.
Most part of investigations in the fractional variational calculus are
based on the replacement of the classical derivatives by fractional derivatives
in the sense of Riemann--Liouville, Caputo, Riesz, and Jumarie
\cite{agr0,MyID:182,B:08,MyID:149}.
Independently of the chosen fractional derivatives,
one obtains, when the fractional order of differentiation tends
to an integer order, the usual problems and results of the calculus
of variations. Although the fractional Euler--Lagrange equations
are obtained in a similar manner as in the standard variational calculus \cite{R:N:H:M:B:07},
some classical results are extremely difficult to be proved in a fractional context.
This explains, for example, why a fractional Legendre type condition
is absent from the literature of fractional variational calculus.
In this work we give a first result in this direction
(\textrm{cf.} Theorem~\ref{thm1}).

Despite its importance in applications, less is known for
discrete-time fractional systems \cite{R:N:H:M:B:07}.
In \cite{Miller} Miller and Ross define a fractional sum of order
$\nu>0$ \emph{via} the solution of a linear difference equation.
They introduce it as (see \S\ref{sec0} for the notations used here)
\begin{equation}
\label{naosei8}
\Delta^{-\nu}f(t)=\frac{1}{\Gamma(\nu)}\sum_{s=a}^{t-\nu}(t-\sigma(s))^{(\nu-1)}f(s).
\end{equation}
Definition \eqref{naosei8} is analogous
to the Riemann-Liouville fractional integral
$$
_a\mathbf{D}_x^{-\nu}f(x)=\frac{1}{\Gamma(\nu)}\int_{a}^{x}(x-s)^{\nu-1}f(s)ds
$$
of order $\nu>0$, which can be obtained \emph{via} the solution of a linear
differential equation \cite{Miller,Miller1}. Basic properties
of the operator $\Delta^{-\nu}$ in (\ref{naosei8}) were obtained in \cite{Miller}.
More recently, Atici and Eloe introduced the
fractional difference of order $\alpha > 0$ by
$\Delta^\alpha f(t)=\Delta^m(\Delta^{\alpha - m}f(t))$,
where $m$ is the integer part of $\alpha$,
and developed some of its properties that allow
to obtain solutions of certain
fractional difference equations \cite{Atici0,Atici}.

The fractional differential calculus has been widely developed in the
past few decades due mainly to its demonstrated applications in
various fields of science and engineering \cite{book:Kilbas,Miller1,Podlubny}.
The study of necessary optimality conditions for
fractional problems of the calculus of variations
and optimal control is a fairly recent issue attracting an increasing attention
-- see \cite{agr0,agr2,RicDel,El-Nabulsi1,El-Nabulsi2,gastao:delfim,gasta1,M:Baleanu}
and references therein -- but available results address only the continuous-time case.
It is well known that discrete analogues of differential equations can be very useful in
applications \cite{B:J:06,J:B:07,book:DCV} and that fractional Euler-Lagrange differential
equations are extremely difficult to solve, being necessary to discretize them \cite{agr2,Ozlem}.
Therefore, it is pertinent to develop a fractional discrete-time theory of the
calculus of variations for the time scale $(h\mathbb{Z})_a$, $h > 0$
(\textrm{cf.} definitions in Section~\ref{sec0}).
Computer simulations show that this time scale is particularly interesting
because when $h$ tends to zero one recovers previous fractional continuous-time results.

Our objective is two-fold. On one hand we proceed to develop the
theory of \emph{fractional difference calculus}, namely, we
introduce the concept of left and right fractional sum/difference
(\textrm{cf.} Definition~\ref{def0}). On the other hand, we believe
that the present work will potentiate research not only in the
fractional calculus of variations but also in solving fractional
difference equations, specifically, fractional equations in which
left and right fractional differences appear.
Because the theory of fractional difference calculus is still in its
infancy \cite{Atici0,Atici,Miller}, the paper is self contained.
In \S\ref{sec0} we introduce notations, we give necessary definitions,
and prove some preliminary results needed in the sequel.
Main results of the paper appear in \S\ref{sec1}:
we prove a fractional formula of $h$-summation by parts
(Theorem~\ref{teor1}), and necessary optimality
conditions of first and second order
(Theorems~\ref{thm0} and \ref{thm1}, respectively)
for the proposed $h$-fractional problem of the calculus of
variations \eqref{naosei7}. Section~\ref{sec2} gives some illustrative examples,
and we end the paper with \S\ref{sec:conc} of conclusions and future perspectives.

The results of the paper are formulated using standard notations
of the theory of time scales \cite{livro:2001,J:B:M:08,malina}.
It remains an interesting open question how to generalize
the present results to an arbitrary time scale $\mathbb{T}$.
This is a difficult and challenging problem since our proofs
deeply rely on the fact that in $\mathbb{T} = (h\mathbb{Z})_a$
the graininess function is a constant.


\section{Preliminaries}
\label{sec0}

We begin by recalling the main definitions and properties of time
scales (\textrm{cf.}~\cite{CD:Bohner:2004,livro:2001} and references
therein). A nonempty closed subset of $\mathbb{R}$ is called a \emph{time
scale} and is denoted by $\mathbb{T}$. The \emph{forward jump operator}
$\sigma:\mathbb{T}\rightarrow\mathbb{T}$ is defined by
$\sigma(t)=\inf{\{s\in\mathbb{T}:s>t}\}$ for all $t\in\mathbb{T}$,
while the \emph{backward jump operator}
$\rho:\mathbb{T}\rightarrow\mathbb{T}$ is defined by
$\rho(t)=\sup{\{s\in\mathbb{T}:s<t\}}$ for all
$t\in\mathbb{T}$, with $\inf\emptyset=\sup\mathbb{T}$
(\textrm{i.e.}, $\sigma(M)=M$ if $\mathbb{T}$ has a maximum $M$) and
$\sup\emptyset=\inf\mathbb{T}$ (\textrm{i.e.}, $\rho(m)=m$ if
$\mathbb{T}$ has a minimum $m$).
A point $t\in\mathbb{T}$ is called \emph{right-dense},
\emph{right-scattered}, \emph{left-dense}, or \emph{left-scattered},
if $\sigma(t)=t$, $\sigma(t)>t$, $\rho(t)=t$, or $\rho(t)<t$,
respectively. Throughout the text we let $\mathbb{T}=[a,b]\cap\tilde{\mathbb{T}}$
with $a<b$ and $\tilde{\mathbb{T}}$ a time scale. We define
$\mathbb{T}^\kappa=\mathbb{T}\backslash(\rho(b),b]$,
$\mathbb{T}^{\kappa^2}=\left(\mathbb{T}^\kappa\right)^\kappa$
and more generally $\mathbb{T}^{\kappa^n}
=\left(\mathbb{T}^{\kappa^{n-1}}\right)^\kappa$, for
$n\in\mathbb{N}$. The following standard notation is used for
$\sigma$ (and $\rho$): $\sigma^0(t) = t$, $\sigma^n(t) = (\sigma
\circ \sigma^{n-1})(t)$, $n \in \mathbb{N}$.
The \emph{graininess function} $\mu:\mathbb{T}\rightarrow[0,\infty)$
is defined by $\mu(t)=\sigma(t)-t$ for all $t\in\mathbb{T}$.

A function $f:\mathbb{T}\rightarrow\mathbb{R}$ is said to be
\emph{delta differentiable} at $t\in\mathbb{T}^\kappa$ if there is a
number $f^{\Delta}(t)$ such that for all $\varepsilon>0$ there
exists a neighborhood $U$ of $t$ (\textrm{i.e.},
$U=(t-\delta,t+\delta)\cap\mathbb{T}$ for some $\delta>0$) such that
$$|f(\sigma(t))-f(s)-f^{\Delta}(t)(\sigma(t)-s)|
\leq\varepsilon|\sigma(t)-s|,\mbox{ for all $s\in U$}.$$ We call
$f^{\Delta}(t)$ the \emph{delta derivative} of $f$ at $t$.
The $r^{th}-$\emph{delta derivative}
($r\in\mathbb{N}$) of $f$ is defined to be the function
$f^{\Delta^r}:\mathbb{T}^{\kappa^r}\rightarrow\mathbb{R}$, provided
$f^{\Delta^{r-1}}$ is delta differentiable on $\mathbb{T}^{\kappa^{r-1}}$.
For delta differentiable $f$ and $g$ and for an arbitrary
time scale $\mathbb{T}$ the next formulas hold:
$f^\sigma(t) = f(t)+\mu(t)f^\Delta(t)$ and
\begin{equation}
\label{deltaderpartes2}
(fg)^\Delta(t) = f^\Delta(t)g^\sigma(t)+f(t)g^\Delta(t)
=f^\Delta(t)g(t)+f^\sigma(t)g^\Delta(t),
\end{equation}
where we abbreviate $f\circ\sigma$ by $f^\sigma$.
A function $f:\mathbb{T}\rightarrow\mathbb{R}$ is called
\emph{rd-continuous} if it is continuous at right-dense points and
if its left-sided limit exists at left-dense points. The
set of all rd-continuous functions is denoted by $C_{\textrm{rd}}$
and the set of all delta differentiable functions
with rd-continuous derivative by $C_{\textrm{rd}}^1$.
It is known that rd-continuous functions possess an
\emph{antiderivative}, \textrm{i.e.}, there exists a function
$F \in C_{\textrm{rd}}^1$ with $F^\Delta=f$. The delta \emph{integral} is then defined by
$\int_{a}^{b}f(t)\Delta t=F(b)-F(a)$. It satisfies the equality
$\int_t^{\sigma(t)}f(\tau)\Delta\tau=\mu(t)f(t)$. We make use
of the following properties of the delta integral:
\begin{lemma}(\textrm{cf.} \cite[Theorem~1.77]{livro:2001})
\label{integracao:partes}
If $a,b\in\mathbb{T}$ and
$f,g\in$C$_{\textrm{rd}}$, then
\begin{enumerate}

 \item$\int_{a}^{b}f(\sigma(t))g^{\Delta}(t)\Delta t
 =\left.(fg)(t)\right|_{t=a}^{t=b}-\int_{a}^{b}f^{\Delta}(t)g(t)\Delta t$;

\item $\int_{a}^{b}f(t)g^{\Delta}(t)\Delta t
=\left.(fg)(t)\right|_{t=a}^{t=b}-\int_{a}^{b}f^{\Delta}(t)g(\sigma(t))\Delta t$.
\end{enumerate}
\end{lemma}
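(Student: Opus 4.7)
The plan is to obtain both identities as direct consequences of the product rule \eqref{deltaderpartes2} together with the fundamental identity
\[
\int_{a}^{b}(fg)^{\Delta}(t)\,\Delta t \;=\; (fg)(b)-(fg)(a),
\]
which is built into the very definition of the delta integral as an antiderivative, recalled in the paragraph just above the lemma.

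For item~1, I would begin with the second form of \eqref{deltaderpartes2}, namely $(fg)^{\Delta}(t)=f^{\Delta}(t)g(t)+f^{\sigma}(t)g^{\Delta}(t)$, and rearrange to $f^{\sigma}(t)g^{\Delta}(t)=(fg)^{\Delta}(t)-f^{\Delta}(t)g(t)$. Integrating both sides on $[a,b]$ against $\Delta t$ and recognising that $f^{\sigma}=f\circ\sigma$ produces exactly the first displayed formula. For item~2, the same two lines of manipulation starting from the first form $(fg)^{\Delta}(t)=f^{\Delta}(t)g^{\sigma}(t)+f(t)g^{\Delta}(t)$, now isolating $f(t)g^{\Delta}(t)$ before integrating, give the second displayed formula.

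I do not expect a real obstacle, since both ingredients (the Leibniz rule and the fundamental identity) are already in place; the argument is essentially algebraic. The one piece of bookkeeping worth flagging is that the integrands must lie in $C_{\textrm{rd}}$ so that their delta integrals exist. This is implicit in the lemma's hypotheses (the statement involves $g^{\Delta}$, so $f$ and $g$ are to be read as delta differentiable with rd-continuous derivatives): products of rd-continuous functions are rd-continuous, and composition with $\sigma$ preserves rd-continuity thanks to the identity $f^{\sigma}=f+\mu f^{\Delta}$ already recorded before \eqref{deltaderpartes2}.
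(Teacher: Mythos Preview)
Your argument is correct and is precisely the standard proof: rearrange each form of the product rule \eqref{deltaderpartes2} and integrate, invoking the antiderivative definition of the delta integral to turn $\int_a^b (fg)^\Delta \Delta t$ into $(fg)(b)-(fg)(a)$. The paper itself does not supply a proof for this lemma; it is quoted verbatim from \cite[Theorem~1.77]{livro:2001}, and the proof there is exactly the one you outline.
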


One way to approach the Riemann-Liouville fractional calculus
is through the theory of linear differential equations \cite{Podlubny}.
Miller and Ross \cite{Miller} use an analogous methodology to introduce
fractional discrete operators for the case
$\mathbb{T}=\mathbb{Z}_a=\{a,a+1,a+2,\ldots\}, a\in\mathbb{R}$.
Here we go a step further: we use the theory of time scales
in order to introduce fractional discrete operators
to the more general case
$\mathbb{T}=(h\mathbb{Z})_a=\{a,a+h,a+2h,\ldots\}$,
$a\in\mathbb{R}$, $h>0$.

For $n\in \mathbb{N}_0$ and rd-continuous functions
$p_i:\mathbb{T}\rightarrow \mathbb{R}$, $1\leq i\leq n$,
let us consider the $n$th order linear dynamic equation
\begin{equation}
\label{linearDiffequa}
Ly=0\, , \quad \text{ where } Ly=y^{\Delta^n}+\sum_{i=1}^n
p_iy^{\Delta^{n-i}} \, .
\end{equation}
A function $y:\mathbb{T}\rightarrow \mathbb{R}$ is said to be a
solution of equation (\ref{linearDiffequa}) on $\mathbb{T}$ provided
$y$ is $n$ times delta differentiable on $\mathbb{T}^{\kappa^n}$ and
satisfies $Ly(t)=0$ for all $t\in\mathbb{T}^{\kappa^n}$.

\begin{lemma}\cite[p.~239]{livro:2001}
\label{8:88 Bohner}
If $z=\left(z_1, \ldots,z_n\right) : \mathbb{T}
\rightarrow \mathbb{R}^n$ satisfies for all $t\in
\mathbb{T}^\kappa$
\begin{equation}\label{5.86}
z^\Delta=A(t)z(t),\qquad\mbox{where}\qquad A=\left(
                                               \begin{array}{ccccc}
                                                 0 & 1 & 0 & \ldots & 0 \\
                                                 \vdots & 0 & 1 & \ddots & \vdots \\
                                                 \vdots &  & \ddots & \ddots & 0 \\
                                                 0 & \ldots & \ldots & 0 & 1 \\
                                                 -p_n & \ldots & \ldots & -p_2 & -p_1 \\
                                               \end{array}
                                             \right)
\end{equation}
then $y=z_1$ is a solution of equation \eqref{linearDiffequa}.
Conversely, if $y$ solves \eqref{linearDiffequa} on $\mathbb{T}$,
then $z=\left(y, y^\Delta,\ldots, y^{\Delta^{n-1}}\right) : \mathbb{T}\rightarrow \mathbb{R}$ satisfies \eqref{5.86} for
all $t\in\mathbb{T}^{\kappa^n}$
\end{lemma}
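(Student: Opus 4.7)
The plan is to verify both implications by unpacking the matrix equation $z^\Delta = A(t)z(t)$ row by row and matching components against the definition of $L$ from \eqref{linearDiffequa}, since the companion matrix $A$ is built precisely so that this correspondence works.

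For the forward direction, I would expand $z^\Delta = A(t)z(t)$ as $n$ scalar equations. The first $n-1$ rows of $A$ give directly $z_i^\Delta = z_{i+1}$ for $i=1,\ldots,n-1$. Setting $y := z_1$ and iterating these identities (a simple induction on $i$) yields $z_i = y^{\Delta^{i-1}}$ for $i=1,\ldots,n$, which in particular shows that $y$ is $n$ times delta differentiable on $\mathbb{T}^{\kappa^n}$. The last row reads $z_n^\Delta = -p_n z_1 - p_{n-1} z_2 - \cdots - p_1 z_n$; substituting $z_i = y^{\Delta^{i-1}}$ and reindexing gives $y^{\Delta^n} + \sum_{i=1}^{n} p_i\, y^{\Delta^{n-i}} = 0$, which is exactly $Ly = 0$.

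For the converse, I would take a solution $y$ of $Ly=0$ on $\mathbb{T}$ and define $z := (y, y^\Delta,\ldots, y^{\Delta^{n-1}})$, which is well defined since $y$ is $n$ times delta differentiable on $\mathbb{T}^{\kappa^n}$. For $i=1,\ldots,n-1$ one has $z_i^\Delta = y^{\Delta^i} = z_{i+1}$, matching rows $1$ through $n-1$ of $A(t)z(t)$. For the last component, $Ly=0$ gives $z_n^\Delta = y^{\Delta^n} = -\sum_{i=1}^{n} p_i\, y^{\Delta^{n-i}} = -\sum_{i=1}^{n} p_i\, z_{n-i+1}$, which is exactly the bottom row of $A(t)z(t)$. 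Hence $z^\Delta = A(t)z(t)$ on $\mathbb{T}^{\kappa^n}$.

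There is no substantive obstacle in this proof: it is a purely formal translation between the scalar and matrix forms of the same set of identities, enabled by the companion shape of $A$. The only care required is bookkeeping with indices, so that the coefficient $p_i$ always multiplies $y^{\Delta^{n-i}}$ (equivalently $z_{n-i+1}$) in both forms, and so that the domains $\mathbb{T}^\kappa$ and $\mathbb{T}^{\kappa^n}$ on which the two formulations are valid are consistently tracked.
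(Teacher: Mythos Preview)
Your argument is correct and is the standard companion-matrix proof: read off $z_i^\Delta=z_{i+1}$ from the first $n-1$ rows, recover $z_i=y^{\Delta^{i-1}}$, and match the last row to $Ly=0$. Note, however, that the paper does not actually prove this lemma; it is quoted verbatim from \cite[p.~239]{livro:2001} without proof, so there is nothing in the paper to compare your proof against.
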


\begin{definition}\cite[p.~239]{livro:2001}
We say that equation (\ref{linearDiffequa}) is \emph{regressive}
provided $I + \mu(t) A(t)$ is invertible for all $t \in \mathbb{T}^\kappa$,
where $A$ is the matrix in \eqref{5.86}.
\end{definition}

\begin{definition}\cite[p.~250]{livro:2001}
We define the Cauchy function $y:\mathbb{T} \times \mathbb{T}^{\kappa^n}\rightarrow \mathbb{R}$
for the linear dynamic equation~(\ref{linearDiffequa}) to be,
for each fixed $s\in\mathbb{T}^{\kappa^n}$, the solution of the initial value problem
\begin{equation}
\label{IVP}
Ly=0,\quad y^{\Delta^i}\left((\sigma(s),s\right)=0,\quad 0\leq i
\leq n-2,\quad y^{\Delta^{n-1}}\left((\sigma(s),s\right)=1\, .
\end{equation}
\end{definition}

\begin{theorem}\cite[p.~251]{livro:2001}\label{eqsol}
Suppose $\{y_1,\ldots,y_n\}$ is a fundamental system of the
regressive equation~(\ref{linearDiffequa}). Let $f\in C_{rd}$. Then
the solution of the initial value problem
$$
Ly=f(t),\quad y^{\Delta^i}(t_0)=0,\quad 0\leq i\leq n-1 \, ,
$$
is given by
$y(t)=\int_{t_0}^t y(t,s)f(s)\Delta s$,
where $y(t,s)$ is the Cauchy function for ~(\ref{linearDiffequa}).
\end{theorem}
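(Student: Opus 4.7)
The plan is to verify directly that the proposed function $y(t) = \int_{t_0}^t y(t,s)\,f(s)\,\Delta s$ meets both the initial conditions and the equation $Ly = f$. The only delicate tool is the time-scale Leibniz rule for differentiating an integral whose integrand depends on the upper limit,
$$
\left( \int_{t_0}^t K(t,s)\,\Delta s \right)^{\!\Delta} = K(\sigma(t),t) + \int_{t_0}^t K^{\Delta}_t(t,s)\,\Delta s,
$$
where $K^{\Delta}_t$ denotes the partial delta derivative in the first variable. Everything else is bookkeeping against the defining data~\eqref{IVP} of the Cauchy function.

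First I would differentiate the candidate $y$ inductively. Applying the Leibniz rule to order $k$ with $0\le k\le n-1$, the boundary term is $y^{\Delta^k}(\sigma(t),t)\,f(t)$, which vanishes by~\eqref{IVP}; hence
$$
y^{\Delta^k}(t) = \int_{t_0}^t y^{\Delta^k}_t(t,s)\,f(s)\,\Delta s, \qquad 0\le k\le n-1.
$$
Evaluating at $t=t_0$ the integral is empty, so $y^{\Delta^k}(t_0)=0$ for $0\le k\le n-1$, proving the initial conditions. For the $n$-th derivative the boundary term now equals $y^{\Delta^{n-1}}(\sigma(t),t)\,f(t) = f(t)$ by~\eqref{IVP}, yielding
$$
y^{\Delta^n}(t) = f(t) + \int_{t_0}^t y^{\Delta^n}_t(t,s)\,f(s)\,\Delta s.
$$

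Second, I would substitute these expressions into $Ly = y^{\Delta^n}+\sum_{i=1}^n p_i\,y^{\Delta^{n-i}}$. Using the linearity of the delta derivative and of the delta integral, and factoring $f(s)$ out,
$$
Ly(t) = f(t) + \int_{t_0}^t \bigl[L_t\, y(t,s)\bigr]\,f(s)\,\Delta s,
$$
where $L_t$ is $L$ acting in the first argument. Since $y(\cdot,s)$ solves the homogeneous equation $Ly=0$ for each fixed $s$, the bracket is zero, leaving $Ly(t)=f(t)$ as required.

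The main obstacle I expect is the careful justification of the Leibniz rule in this iterated setting: at each order $k$ one must check that $y^{\Delta^k}_t(t,s)$ is rd-continuous jointly in $(t,s)$ so that interchange of integration and delta differentiation is legitimate, and the existence of the Cauchy function together with $n$-fold delta differentiability in $t$ requires regressivity of~\eqref{linearDiffequa}, which is why the fundamental system in the hypothesis is taken for the regressive equation. Once these regularity matters are handled, the two displayed computations close the proof.
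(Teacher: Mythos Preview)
Your argument is correct and is the standard variation-of-parameters verification. Note, however, that the paper does not supply its own proof of this theorem: it is quoted verbatim from \cite[p.~251]{livro:2001} and used as a black box, so there is no ``paper's proof'' to compare against. What you have written is essentially the proof one finds in that reference.

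One small wording issue: when you say ``Applying the Leibniz rule to order $k$ with $0\le k\le n-1$, the boundary term is $y^{\Delta^k}(\sigma(t),t)\,f(t)$, which vanishes by~\eqref{IVP}'', the index is off by one. The boundary term produced when passing from the integral representation of $y^{\Delta^{k}}$ to that of $y^{\Delta^{k+1}}$ is $y^{\Delta^{k}}(\sigma(t),t)\,f(t)$, and \eqref{IVP} makes this vanish only for $0\le k\le n-2$; the case $k=n-1$ is precisely the step where the boundary term survives and equals $f(t)$, as you correctly state two lines later. So the displayed formula $y^{\Delta^k}(t)=\int_{t_0}^t y^{\Delta^k}_t(t,s)f(s)\,\Delta s$ for $0\le k\le n-1$ is right, but the accompanying sentence should read ``the boundary term $y^{\Delta^{k-1}}(\sigma(t),t)\,f(t)$ vanishes for $1\le k\le n-1$'' (or equivalently restrict $k$ to $0\le k\le n-2$ in your phrasing). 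This is purely cosmetic; the logic and the final computation $Ly=f$ are sound.
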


It is known that $y(t,s):=H_{n-1}(t,\sigma(s))$
is the Cauchy function for $y^{\Delta^n}=0$,
where $H_{n-1}$ is a time scale generalized polynomial
\cite[Example~5.115]{livro:2001}.
The generalized polynomials $H_{k}$ are the functions
$H_k:\mathbb{T}^2\rightarrow \mathbb{R}$, $k\in \mathbb{N}_0$,
defined recursively as follows:
\begin{equation*}
H_0(t,s)\equiv 1 \, , \quad
H_{k+1}(t,s)=\int_s^t H_k(\tau,s)\Delta\tau \, , \quad
k = 1, 2, \ldots
\end{equation*}
for all $s,t\in \mathbb{T}$.
If we let $H_k^\Delta(t,s)$ denote, for each fixed $s$, the derivative
of $H_k(t,s)$ with respect to $t$, then (\textrm{cf.} \cite[p.~38]{livro:2001})
$$
H_k^\Delta(t,s)=H_{k-1}(t,s)\quad \text{for }
k\in \mathbb{N}, \ t\in \mathbb{T}^\kappa \, .
$$

From now on we restrict ourselves to the time scale
$\mathbb{T}=(h\mathbb{Z})_a$, $h > 0$, for which
the graininess function is the constant $h$.
Our main goal is to propose and develop a discrete-time
fractional variational theory in $\mathbb{T}=(h\mathbb{Z})_a$.
We borrow the notations from the recent calculus of variations on time scales
\cite{CD:Bohner:2004,RD,J:B:M:08}. How to generalize our results
to an arbitrary time scale $\mathbb{T}$,
with the graininess function $\mu$ depending on time, is not clear
and remains a challenging question.

Let $a\in\mathbb{R}$ and $h>0$,
$(h\mathbb{Z})_a=\{a,a+h,a+2h,\ldots\}$, and
$b=a+kh$ for some $k\in\mathbb{N}$. We have $\sigma(t)=t+h$,
$\rho(t)=t-h$, $\mu(t) \equiv h$, and we will frequently write
$f^\sigma(t)=f(\sigma(t))$. We put
$\mathbb{T}=[a,b]\cap (h\mathbb{Z})_a$, so that
$\mathbb{T}^\kappa=[a,\rho(b)]\cap (h\mathbb{Z})_a$ and
$\mathbb{T}^{\kappa^2}=[a,\rho^2(b)] \cap (h\mathbb{Z})_a$.
The delta derivative coincides in this case with
the forward $h$-difference:
$\displaystyle{f^{\Delta}(t)=\frac{f^\sigma(t)-f(t)}{\mu(t)}}$. If
$h=1$, then we have the usual discrete forward difference $\Delta f(t)$.
The delta integral gives the $h$-sum (or $h$-integral) of $f$:
$\displaystyle{\int_a^b
f(t)\Delta t=\sum_{k=\frac{a}{h}}^{\frac{b}{h}-1}f(kh)h}$. If we
have a function $f$ of two variables, $f(t,s)$, its partial
forward $h$-differences will be denoted by $\Delta_{t,h}$ and
$\Delta_{s,h}$, respectively. We will make use of the standard conventions
$\sum_{t=c}^{c-1}f(t)=0$, $c\in\mathbb{Z}$,
and $\prod_{i=0}^{-1}f(i)=1$.
Often, \emph{left fractional delta integration} (resp.,
\emph{right fractional delta integration}) of order $\nu>0$
is denoted by $_a\Delta_t^{-\nu}f(t)$ (resp. $_t\Delta_b^{-\nu}f(t)$).
Here, similarly as in Ross \textit{et. al.} \cite{Ross}, where the authors omit
the subscript $t$ on the operator (the operator itself cannot depend
on $t$), we write $_a\Delta_h^{-\nu}f(t)$ (resp. $_h\Delta_b^{-\nu}f(t)$).

Before giving an explicit formula for the generalized polynomials $H_{k}$ on $h\mathbb{Z}$
we introduce the following definition:
\begin{definition}
For arbitrary $x,y\in\mathbb{R}$ the $h$-factorial function is defined by
\begin{equation*}
x_h^{(y)}:=h^y\frac{\Gamma(\frac{x}{h}+1)}{\Gamma(\frac{x}{h}+1-y)}\, ,
\end{equation*}
where $\Gamma$ is the well-known Euler gamma function, and
we use the convention that division at a pole yields zero.
\end{definition}

\begin{remark}
For $h = 1$, and in accordance with
the previous literature \eqref{naosei8},
we write $x^{(y)}$ to denote $x_h^{(y)}$.
\end{remark}

\begin{proposition}
\label{prop:d}
For the time-scale $\mathbb{T}=(h\mathbb{Z})_a$ one has
\begin{equation}
\label{hn}
H_{k}(t,s):=\frac{(t-s)_h^{(k)}}{k!}\quad\mbox{for all}\quad s,t\in \mathbb{T}
\text{ and } k\in \mathbb{N}_0 \, .
\end{equation}
\end{proposition}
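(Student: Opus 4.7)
The plan is to prove \eqref{hn} by induction on $k$, using the recursive definition of the generalized polynomials $H_k$.

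For the base case $k=0$, I would check directly that $(t-s)_h^{(0)} = h^0 \, \Gamma(\tfrac{t-s}{h}+1)/\Gamma(\tfrac{t-s}{h}+1) = 1$, so $(t-s)_h^{(0)}/0! = 1 = H_0(t,s)$. For the inductive step, assuming $H_k(t,s) = (t-s)_h^{(k)}/k!$, I would show that the candidate $G_{k+1}(t,s) := (t-s)_h^{(k+1)}/(k+1)!$ satisfies the defining relation $H_{k+1}(t,s) = \int_s^t H_k(\tau,s)\Delta\tau$. By the fundamental theorem of the delta calculus on $(h\mathbb{Z})_a$, it suffices to verify two things: that $\Delta_{t,h} G_{k+1}(t,s) = (t-s)_h^{(k)}/k!$ and that $G_{k+1}(s,s) = 0$.

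The main computation is the $h$-difference of the $h$-factorial. Writing $(t-s)_h^{(k+1)} = h^{k+1}\Gamma(\tfrac{t-s}{h}+1)/\Gamma(\tfrac{t-s}{h}-k)$, I would compute
\[
\Delta_{t,h}(t-s)_h^{(k+1)} = h^{k}\!\left[\frac{\Gamma(\tfrac{t-s}{h}+2)}{\Gamma(\tfrac{t-s}{h}-k+1)} - \frac{\Gamma(\tfrac{t-s}{h}+1)}{\Gamma(\tfrac{t-s}{h}-k)}\right]
\]
and then apply the functional equation $\Gamma(x+1)=x\Gamma(x)$ twice to rewrite the bracket over the common denominator $\Gamma(\tfrac{t-s}{h}-k+1)$, obtaining a factor of $(\tfrac{t-s}{h}+1)-(\tfrac{t-s}{h}-k) = k+1$ in the numerator. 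This collapses to $(k+1)(t-s)_h^{(k)}$, which gives $\Delta_{t,h} G_{k+1}(t,s) = (t-s)_h^{(k)}/k! = H_k(t,s)$.

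For the boundary condition $G_{k+1}(s,s)=0$, I would invoke the convention that division at a pole of $\Gamma$ yields zero: $(s-s)_h^{(k+1)} = h^{k+1}\Gamma(1)/\Gamma(-k)$ with $-k$ a nonpositive integer, hence zero. Combining these two facts with the recursive definition $H_{k+1}(t,s)=\int_s^t H_k(\tau,s)\Delta\tau$ completes the induction. The only real obstacle is the gamma-function bookkeeping in the difference computation; once the telescoping of $\Gamma$-factors is handled cleanly, the argument is routine.
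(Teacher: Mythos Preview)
Your proposal is correct and follows essentially the same approach as the paper: induction on $k$, with the inductive step reduced to the delta-difference identity $\Delta_{t,h}\bigl[(t-s)_h^{(k+1)}/(k+1)!\bigr]=(t-s)_h^{(k)}/k!$, which the paper isolates as a separate technical lemma (Lemma~\ref{lem:tl}) and proves by the same $\Gamma(x+1)=x\Gamma(x)$ bookkeeping you outline. Your version is in fact slightly more explicit, since you verify the boundary value $G_{k+1}(s,s)=0$ needed to evaluate the definite delta integral, a point the paper leaves implicit.
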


To prove \eqref{hn} we use the following technical lemma.
Throughout the text the basic property
$\Gamma(x+1)=x\Gamma(x)$ of the gamma function will be frequently used.

\begin{lemma}
\label{lem:tl}
Let $s \in \mathbb{T}$. Then,
for all $t \in \mathbb{T}^\kappa$ one has
\begin{equation*}
\Delta_{t,h} \left\{\frac{(t-s)_h^{(k+1)}}{(k+1)!}\right\}
= \frac{(t-s)_h^{(k)}}{k!} \, .
\end{equation*}
\end{lemma}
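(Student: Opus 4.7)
The plan is to unfold everything to bare gamma functions and let the recursion $\Gamma(z+1)=z\Gamma(z)$ do the work. Concretely, I would start from the definition of the forward $h$-difference,
\begin{equation*}
\Delta_{t,h}\!\left\{\frac{(t-s)_h^{(k+1)}}{(k+1)!}\right\}
=\frac{1}{h(k+1)!}\Bigl[(t+h-s)_h^{(k+1)}-(t-s)_h^{(k+1)}\Bigr],
\end{equation*}
and then substitute the formula $x_h^{(y)}=h^{y}\Gamma(x/h+1)/\Gamma(x/h+1-y)$ into each of the two $h$-factorials. Writing $u=(t-s)/h$ to keep the formulas short, the two terms become $h^{k+1}\Gamma(u+1)/\Gamma(u-k)$ and $h^{k+1}\Gamma(u+2)/\Gamma(u-k+1)$.

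Next I would pull out the common factor $h^{k+1}\Gamma(u+1)/\Gamma(u-k)$ after applying $\Gamma(u+2)=(u+1)\Gamma(u+1)$ and $\Gamma(u-k+1)=(u-k)\Gamma(u-k)$. The bracket then collapses to
\begin{equation*}
\frac{u+1}{u-k}-1=\frac{k+1}{u-k},
\end{equation*}
so the $(k+1)$ cancels against $(k+1)!$, one power of $h$ cancels against the $1/h$ outside, and I am left with $\dfrac{h^{k}}{k!}\cdot\dfrac{\Gamma(u+1)}{(u-k)\Gamma(u-k)}=\dfrac{h^{k}}{k!}\cdot\dfrac{\Gamma(u+1)}{\Gamma(u-k+1)}$, which is exactly $(t-s)_h^{(k)}/k!$ by definition. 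This is the whole computation.

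The one subtlety, and the only step I would be careful about, is the convention "division at a pole yields zero". The manipulations above are justified whenever $u-k=(t-s)/h-k$ is not a nonpositive integer, so that all the gamma functions involved are finite and $\Gamma(u-k+1)=(u-k)\Gamma(u-k)$ is a legitimate identity. The boundary cases (where $(t-s)/h$ is a small nonnegative integer, so that $(t-s)_h^{(k+1)}$ vanishes or reduces because of a pole in the denominator gamma) have to be treated by hand: in each such case, I would verify directly from the definition that both sides of the claimed equality equal zero (or the obvious finite value), using the pole-convention to interpret the quotients. This is a purely clerical case-check and I expect no real obstacle — the algebraic core of the proof is the two-line gamma manipulation above.
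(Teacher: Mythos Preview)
Your proof is correct and follows essentially the same route as the paper: both unfold the $h$-difference, express the $h$-factorials via gamma functions, and use $\Gamma(z+1)=z\Gamma(z)$ to collapse the difference to the desired quotient. Your extra remark about the pole convention is more careful than the paper, which simply performs the formal computation without discussing those boundary cases.
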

\begin{proof}
The equality follows by direct computations:
\begin{equation*}
\begin{split}
\Delta_{t,h} &\left\{\frac{(t-s)_h^{(k+1)}}{(k+1)!}\right\}
=\frac{1}{h}\left\{\frac{(\sigma(t)-s)_h^{(k+1)}}{(k+1)!}-\frac{(t-s)_h^{(k+1)}}{(k+1)!}\right\}\\
&=\frac{h^{k+1}}{h(k+1)!}\left\{\frac{\Gamma((t+h-s)/h+1)}{\Gamma((t+h-s)/h+1-(k+1))}-\frac{\Gamma((t-s)/h+1)}{\Gamma((t-s)/h+1-(k+1))}\right\}\\
&=\frac{h^k}{(k+1)!}\left\{\frac{((t-s)/h+1)\Gamma((t-s)/h+1)}{((t-s)/h-k)\Gamma((t-s)/h-k)}-\frac{\Gamma((t-s)/h+1)}{\Gamma((t-s)/h-k)}\right\}\\
&=\frac{h^k}{k!}\left\{\frac{\Gamma((t-s)/h+1)}{\Gamma((t-s)/h+1-k)}\right\} =\frac{(t-s)_h^{(k)}}{k!} \, .
\end{split}
\end{equation*}
\end{proof}
\begin{proof}(of Proposition~\ref{prop:d})
We proceed by mathematical induction. For $k=0$
$$
H_0(t,s)=\frac{1}{0!}h^0\frac{\Gamma(\frac{t-s}{h}+1)}{\Gamma(\frac{t-s}{h}+1-0)}
=\frac{\Gamma(\frac{t-s}{h}+1)}{\Gamma(\frac{t-s}{h}+1)}=1 \, .
$$
Assume that (\ref{hn}) holds for $k$ replaced by $m$. Then by Lemma~\ref{lem:tl}
\begin{eqnarray*}
H_{m+1}(t,s) &=& \int_s^t H_m(\tau,s)\Delta\tau
= \int_s^t \frac{(\tau-s)_h^{(m)}}{m!} \Delta\tau
= \frac{(t-s)_h^{(m+1)}}{(m+1)!},
\end{eqnarray*}
which is (\ref{hn}) with $k$ replaced by $m+1$.
\end{proof}

Let $y_1(t),\ldots,y_n(t)$ be $n$ linearly independent solutions of
the linear homogeneous dynamic equation $y^{\Delta^n}=0$.
From Theorem~\ref{eqsol} we know that the solution of
\eqref{IVP} (with $L=\Delta^n$ and $t_0=a$) is
\begin{equation*}
y(t) = \Delta^{-n} f(t)=\int_a^t
\frac{(t-\sigma(s))_h^{(n-1)}}{\Gamma(n)}f(s)\Delta s\\
=\frac{1}{\Gamma(n)}\sum_{k=a/h}^{t/h-1}
(t-\sigma(kh))_h^{(n-1)} f(kh) h \, .
\end{equation*}
Since $y^{\Delta_i}(a)=0$, $i = 0,\ldots,n-1$, then we can write that
\begin{equation}
\label{eq:derDh:int}
\begin{split}
\Delta^{-n} f(t)
&= \frac{1}{\Gamma(n)}\sum_{k=a/h}^{t/h-n}
(t-\sigma(kh))_h^{(n-1)} f(kh) h \\
&= \frac{1}{\Gamma(n)}\int_a^{\sigma(t-nh)}(t-\sigma(s))_h^{(n-1)}f(s) \Delta s \, .
\end{split}
\end{equation}
Note that function $t \rightarrow (\Delta^{-n} f)(t)$ is defined for
$t=a+n h \mbox{ mod}(h)$ while function $t \rightarrow f(t)$ is defined for $t=a \mbox{
mod}(h)$. Extending \eqref{eq:derDh:int} to any positive real value $\nu$,
and having as an analogy the continuous left and right
fractional derivatives \cite{Miller1},
we define the left fractional $h$-sum and the right fractional $h$-sum as follows.
We denote by $\mathcal{F}_\mathbb{T}$ the set of all real valued functions
defined on a given time scale $\mathbb{T}$.

\begin{definition}
\label{def0}
Let $a\in\mathbb{R}$, $h>0$,
$b=a+kh$ with $k\in\mathbb{N}$, and put
$\mathbb{T}=[a,b]\cap(h\mathbb{Z})_a$.
Consider $f\in\mathcal{F}_\mathbb{T}$. The left and right fractional $h$-sum of order $\nu>0$
are, respectively, the operators $_a\Delta_h^{-\nu} : \mathcal{F}_\mathbb{T} \rightarrow \mathcal{F}_{\tilde{\mathbb{T}}_\nu^+}$
and $_h\Delta_b^{-\nu} : \mathcal{F}_\mathbb{T}
\rightarrow \mathcal{F}_{\tilde{\mathbb{T}}_\nu^-}$,
$\tilde{\mathbb{T}}_\nu^\pm = \{t \pm \nu h : t \in \mathbb{T}\}$, defined by
\begin{equation*}
\begin{split}
_a\Delta_h^{-\nu}f(t) &= \frac{1}{\Gamma(\nu)}\int_{a}^{\sigma(t-\nu h)}(t-\sigma(s))_h^{(\nu-1)}f(s)\Delta s
=\frac{1}{\Gamma(\nu)}\sum_{k=\frac{a}{h}}^{\frac{t}{h}-\nu}(t-\sigma(kh))_h^{(\nu-1)}f(kh)h\\
_h\Delta_b^{-\nu}f(t) &= \frac{1}{\Gamma(\nu)}\int_{t+\nu h}^{\sigma(b)}(s-\sigma(t))_h^{(\nu-1)}f(s)\Delta s
=\frac{1}{\Gamma(\nu)}\sum_{k=\frac{t}{h}+\nu}^{\frac{b}{h}}(kh-\sigma(t))_h^{(\nu-1)}f(kh)h.
\end{split}
\end{equation*}
\end{definition}

\begin{remark}
In Definition~\ref{def0} we are using summations with limits that are reals.
For example, the summation that appears in the definition
of operator $_a\Delta_h^{-\nu}$ has the following meaning:
$$
\sum_{k = \frac{a}{h}}^{\frac{t}{h} - \nu} G(k) =
G(a/h) + G(a/h+1) + G(a/h+2) + \cdots + G(t/h - \nu),
$$
where $t \in \{ a + \nu h, a + h + \nu h , a + 2 h + \nu h ,  \ldots, \underbrace{a+kh}_b + \nu h\}$
with $k\in\mathbb{N}$.
\end{remark}

\begin{lemma}
Let $\nu>0$ be an arbitrary positive real number. For any $t \in \mathbb{T}$ we have:
(i) $\lim_{\nu\rightarrow 0}{_a}\Delta_h^{-\nu}f(t+\nu h)=f(t)$;
(ii) $\lim_{\nu\rightarrow 0}{_h}\Delta_b^{-\nu}f(t-\nu h)=f(t)$.
\end{lemma}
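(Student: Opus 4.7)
The plan is to work directly from the summation form of Definition~\ref{def0}, isolate the term in the sum that survives the limit, and show that all other terms are killed by the factor $1/\Gamma(\nu)$ which vanishes as $\nu \to 0^+$.

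First I would substitute $t \mapsto t + \nu h$ in the summation formula for ${_a}\Delta_h^{-\nu}$, obtaining
\begin{equation*}
{_a}\Delta_h^{-\nu} f(t+\nu h)
= \frac{1}{\Gamma(\nu)} \sum_{k=a/h}^{t/h} \bigl(t+\nu h - \sigma(kh)\bigr)_h^{(\nu-1)} f(kh)\, h,
\end{equation*}
which is a finite sum with $(t-a)/h + 1$ terms. The key observation is to split off the top term $k = t/h$. For that term one has $t + \nu h - \sigma(t) = (\nu-1)h$, and from the definition of the $h$-factorial,
\begin{equation*}
\bigl((\nu-1)h\bigr)_h^{(\nu-1)}
= h^{\nu-1} \frac{\Gamma(\nu)}{\Gamma(1)} = h^{\nu-1}\Gamma(\nu),
\end{equation*}
so the top term equals $h^{\nu}\, f(t)$, which tends to $f(t)$ as $\nu \to 0^+$.

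Next I would handle the remaining terms, indexed by $k = a/h, a/h+1, \dots, t/h - 1$. For each such $k$, writing $j = t/h - k \ge 1$, one has $t + \nu h - \sigma(kh) = (\nu + j - 1)h$, so
\begin{equation*}
\bigl(t+\nu h - \sigma(kh)\bigr)_h^{(\nu-1)}
= h^{\nu-1}\,\frac{\Gamma(\nu+j)}{\Gamma(j+1)},
\end{equation*}
which is bounded in $\nu$ on a neighbourhood of $0$ (since $\nu + j \ge 1$). Using $\Gamma(\nu) = \Gamma(\nu+1)/\nu$, the prefactor $1/\Gamma(\nu) = \nu/\Gamma(\nu+1)$ tends to $0$, so each of these finitely many terms vanishes in the limit. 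Combining this with the previous step yields (i). For (ii) I would follow exactly the same path: substitute $t \mapsto t - \nu h$ in the right fractional $h$-sum, isolate the bottom term $k = t/h$ for which $kh - \sigma(t - \nu h) = (\nu - 1)h$ gives again a contribution $h^\nu f(t) \to f(t)$, and dispose of the remaining finitely many terms by the same $1/\Gamma(\nu) \to 0$ argument.

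The only real obstacle is arithmetic bookkeeping: making sure the index ranges after the shift $t \mapsto t \pm \nu h$ are correct integers, and verifying the simplification of the $h$-factorial at the endpoint. Once the endpoint term is computed cleanly, the remainder of the argument is essentially the observation that a finite sum with an overall factor going to zero and bounded summands goes to zero.
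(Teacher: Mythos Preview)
Your proposal is correct and follows essentially the same approach as the paper: after the shift $t\mapsto t+\nu h$, the paper isolates the top term $k=t/h$ to obtain $h^\nu f(t)$ and rewrites the prefactor on the remaining sum as $\nu/\Gamma(\nu+1)$, then passes to the limit. Your extra explicit check that each remaining summand $h^{\nu-1}\Gamma(j+\nu)/\Gamma(j+1)$ is bounded near $\nu=0$ is a welcome detail, but the decomposition and the mechanism are identical.
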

\begin{proof}
Since
\begin{align*}
{_a}\Delta_h^{-\nu}f(t+\nu h)&=\frac{1}{\Gamma(\nu)}\int_{a}^{\sigma(t)}(t+\nu
h-\sigma(s))_h^{(\nu-1)}f(s)\Delta s\\
&=\frac{1}{\Gamma(\nu)}\sum_{k=\frac{a}{h}}^{\frac{t}{h}}(t+\nu h-\sigma(kh))_h^{(\nu-1)}f(kh)h\\
&=h^{\nu}f(t)+\frac{\nu}{\Gamma(\nu+1)}\sum_{k=\frac{a}{h}}^{\frac{\rho(t)}{h}}(t+\nu h-\sigma(kh))_h^{(\nu-1)}f(kh)h\, ,
\end{align*}
it follows that $\lim_{\nu\rightarrow 0}{_a}\Delta_h^{-\nu}f(t+\nu h)=f(t)$.
The proof of (ii) is similar.
\end{proof}

For any $t\in\mathbb{T}$ and for any $\nu\geq 0$
we define $_a\Delta_h^{0}f(t) := {_h}\Delta_b^{0}f(t) := f(t)$
and write
\begin{equation}
\label{seila1}
\begin{gathered}
{_a}\Delta_h^{-\nu}f(t+\nu h) = h^\nu f(t)
+\frac{\nu}{\Gamma(\nu+1)}\int_{a}^{t}(t+\nu
h-\sigma(s))_h^{(\nu-1)}f(s)\Delta s\, , \\
{_h}\Delta_b^{-\nu}f(t)=h^\nu f(t-\nu h)
+ \frac{\nu}{\Gamma(\nu+1)}\int_{\sigma(t)}^{\sigma(b)}(s+\nu
h-\sigma(t))_h^{(\nu-1)}f(s)\Delta s \, .
\end{gathered}
\end{equation}

\begin{theorem}\label{thm2}
Let $f\in\mathcal{F}_\mathbb{T}$ and $\nu\geq0$.
For all $t\in\mathbb{T}^\kappa$ we have
\begin{equation}
\label{naosei1}
{_a}\Delta_{h}^{-\nu}
f^{\Delta}(t+\nu h)=(_a\Delta_h^{-\nu}f(t+\nu h))^{\Delta}
-\frac{\nu}{\Gamma(\nu + 1)}(t+\nu h-a)_h^{(\nu-1)}f(a) \, .
\end{equation}
\end{theorem}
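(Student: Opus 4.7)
The plan is to evaluate both sides of (\ref{naosei1}) as closed-form expressions in $f$, $f(a)$ and a common residual integral, and then subtract.

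On the left-hand side, I would start from Definition~\ref{def0} applied to $f^\Delta$, writing
$${_a}\Delta_h^{-\nu}f^\Delta(t+\nu h)=\frac{1}{\Gamma(\nu)}\int_{a}^{\sigma(t)}(t+\nu h-\sigma(s))_h^{(\nu-1)}f^\Delta(s)\Delta s.$$
Setting $\phi(s)=(t+\nu h-s)_h^{(\nu-1)}$ so that $\phi^\sigma(s)=(t+\nu h-\sigma(s))_h^{(\nu-1)}$, I would apply Lemma~\ref{integracao:partes}(1). Two auxiliary $h$-factorial facts are needed, both proved by the same gamma-ratio telescoping as Lemma~\ref{lem:tl}:
$\Delta_{t,h}(t-s)_h^{(\alpha)}=\alpha\,(t-s)_h^{(\alpha-1)}$ and $\Delta_{s,h}(c-s)_h^{(\alpha)}=-\alpha\,(c-\sigma(s))_h^{(\alpha-1)}$. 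The boundary evaluations then reduce to $\phi(\sigma(t))=((\nu-1)h)_h^{(\nu-1)}=h^{\nu-1}\Gamma(\nu)$ and $\phi(a)=(t+\nu h-a)_h^{(\nu-1)}$, so integration by parts gives
$${_a}\Delta_h^{-\nu}f^\Delta(t+\nu h)=h^{\nu-1}f(\sigma(t))-\frac{(t+\nu h-a)_h^{(\nu-1)}f(a)}{\Gamma(\nu)}+\frac{\nu-1}{\Gamma(\nu)}\int_{a}^{\sigma(t)}(t+\nu h-\sigma(s))_h^{(\nu-2)}f(s)\Delta s.$$

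On the right-hand side, I would compute $({_a}\Delta_h^{-\nu}f(t+\nu h))^\Delta$ directly from the sum representation of Definition~\ref{def0}, writing out $F(t+h)-F(t)$ with $F(t)={_a}\Delta_h^{-\nu}f(t+\nu h)$. This splits into (i) a single ``new'' term at the upper limit that again contributes the telescoped factor $((\nu-1)h)_h^{(\nu-1)}=h^{\nu-1}\Gamma(\nu)$, producing $h^{\nu-1}f(\sigma(t))$, and (ii) an in-sum piece that, by the identity $\Delta_{t,h}(t-s)_h^{(\alpha)}=\alpha(t-s)_h^{(\alpha-1)}$, becomes exactly $\frac{\nu-1}{\Gamma(\nu)}\int_{a}^{\sigma(t)}(t+\nu h-\sigma(s))_h^{(\nu-2)}f(s)\Delta s$. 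Subtracting the two closed-form expressions, the $h^{\nu-1}f(\sigma(t))$ terms and the $(\nu-2)$-exponent integrals cancel identically, leaving only the boundary contribution $-(t+\nu h-a)_h^{(\nu-1)}f(a)/\Gamma(\nu)$, which equals $-\frac{\nu}{\Gamma(\nu+1)}(t+\nu h-a)_h^{(\nu-1)}f(a)$ as required.

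The main obstacle is not conceptual but notational: keeping track of the exponent shifts $\nu\to\nu-1\to\nu-2$ and the limits of summation/integration (the replacement of $\sigma(t-\nu h)$ by $\sigma(t)$ after the substitution $t\mapsto t+\nu h$), and verifying that the same residual integral appears on both sides so that the cancellation is exact. The two $h$-factorial derivative rules mentioned above are the single tool on which every calculation hinges, so I would prove them once at the outset and reuse them in both halves of the argument.
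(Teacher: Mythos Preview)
Your argument is correct and follows essentially the same strategy as the paper: integrate by parts in $s$ to rewrite ${_a}\Delta_h^{-\nu}f^\Delta(t+\nu h)$, compute $({_a}\Delta_h^{-\nu}f(t+\nu h))^\Delta$ by direct $t$-differencing, and match the resulting expressions so that only the $f(a)$ boundary term survives. The only cosmetic difference is that the paper first passes through the split representation \eqref{seila1} and the product-rule Lemma~\ref{lemma:tl} (so its residual integrals run over $[a,t]$ and carry the extra summands $h^\nu f^\Delta(t)+\nu h^{\nu-1}f(t)$), whereas you work directly from Definition~\ref{def0} and Lemma~\ref{integracao:partes} with integrals over $[a,\sigma(t)]$ and the single boundary term $h^{\nu-1}f(\sigma(t))$; the two bookkeepings are equivalent, and yours is arguably a touch cleaner.
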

To prove Theorem~\ref{thm2} we make use of a technical lemma:
\begin{lemma}
\label{lemma:tl}
Let $t\in\mathbb{T}^\kappa$. The following
equality holds for all $s\in\mathbb{T}^\kappa$:
\begin{multline}
\label{proddiff}
\Delta_{s,h}\left((t+\nu h-s)_h^{(\nu-1)}f(s))\right)\\
=(t+\nu h-\sigma(s))_h^{(\nu-1)}f^{\Delta}(s) -(v-1)(t+\nu h-\sigma(s))_h^{(\nu-2)}f(s) \, .
\end{multline}
\end{lemma}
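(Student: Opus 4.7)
The plan is to recognize the left-hand side as a product (of $(t+\nu h -s)_h^{(\nu-1)}$ with $f(s)$) and apply the time-scale Leibniz rule \eqref{deltaderpartes2}. Using the form $(FG)^\Delta = F^\Delta G + F^\sigma G^\Delta$ with $F(s)=(t+\nu h -s)_h^{(\nu-1)}$ and $G(s)=f(s)$, the equality \eqref{proddiff} reduces to proving the pointwise identity
\[
\Delta_{s,h}\bigl[(t+\nu h-s)_h^{(\nu-1)}\bigr] = -(\nu-1)\,(t+\nu h-\sigma(s))_h^{(\nu-2)},
\]
since then the $F^\sigma G^\Delta$ term immediately supplies $(t+\nu h-\sigma(s))_h^{(\nu-1)} f^\Delta(s)$ (because $F^\sigma(s)=F(\sigma(s))=(t+\nu h-\sigma(s))_h^{(\nu-1)}$), and the $F^\Delta G$ term supplies the second summand on the right of \eqref{proddiff}.

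To establish the pointwise identity, I would unfold the definition of the $h$-factorial: by definition
\[
(t+\nu h-s)_h^{(\nu-1)} = h^{\nu-1}\,\frac{\Gamma\bigl(\tfrac{t-s}{h}+\nu+1\bigr)}{\Gamma\bigl(\tfrac{t-s}{h}+2\bigr)}.
\]
Then I would form $\frac{1}{h}\bigl[F(s+h)-F(s)\bigr]$, factor out the common ratio $\Gamma\bigl(\tfrac{t-s}{h}+\nu\bigr)/\Gamma\bigl(\tfrac{t-s}{h}+1\bigr)$, and use the identity $\Gamma(x+1)=x\Gamma(x)$ (in the spirit of the computation performed in Lemma~\ref{lem:tl}) to collapse the bracketed expression into $(1-\nu)/\bigl(\tfrac{t-s}{h}+1\bigr)$. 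A further application of $\Gamma(x+1)=x\Gamma(x)$ packs the $(t-s)/h+1$ factor back into the denominator gamma, yielding
\[
\Delta_{s,h} F(s) = (1-\nu)\, h^{\nu-2}\,\frac{\Gamma\bigl(\tfrac{t-s}{h}+\nu\bigr)}{\Gamma\bigl(\tfrac{t-s}{h}+2\bigr)}.
\]
Finally, recognizing $t+\nu h - \sigma(s) = t+(\nu-1)h-s$ and re-reading the right-hand side through the definition of $(\cdot)_h^{(\nu-2)}$, the remaining quantity is exactly $-(\nu-1)(t+\nu h-\sigma(s))_h^{(\nu-2)}$.

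The only delicate point is bookkeeping of the arguments of the gamma functions after the shift $s \mapsto s+h$: this shifts $(t-s)/h$ to $(t-s)/h -1$, and one must track how that single unit-shift interacts with the ``$+\nu+1$'' and ``$+2$'' offsets in numerator and denominator. Everything else is mechanical — the Leibniz rule does the structural work, and the computation mirrors Lemma~\ref{lem:tl} with a negated variable, which is why the sign flip $(1-\nu)$ naturally appears.
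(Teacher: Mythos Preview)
Your proposal is correct and follows essentially the same route as the paper: apply the product rule \eqref{deltaderpartes2} in the form $(FG)^\Delta=F^\Delta G+F^\sigma G^\Delta$ with $F(s)=(t+\nu h-s)_h^{(\nu-1)}$ and $G=f$, then compute $\Delta_{s,h}F(s)$ directly from the gamma-function definition of the $h$-factorial using $\Gamma(x+1)=x\Gamma(x)$. The paper's proof carries out exactly this computation line by line, arriving at $-(\nu-1)(t+\nu h-\sigma(s))_h^{(\nu-2)}$ just as you outline.
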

\begin{proof}
Direct calculations give the intended result:
\begin{equation*}
\begin{split}
\Delta&_{s,h} \left((t+\nu h-s)_h^{(\nu-1)}f(s)\right)\\
&=\Delta_{s,h}\left((t+\nu h-s)_h^{(\nu-1)}\right)f(s)+\left(t+\nu h
-\sigma(s)\right)_h^{(\nu-1)}f^{\Delta}(s)\\
&=\frac{f(s)}{h}\left[h^{\nu-1}\frac{\Gamma\left(\frac{t+\nu
h-\sigma(s)}{h}+1\right)}{\Gamma\left(\frac{t+\nu
h-\sigma(s)}{h}+1-(\nu-1)\right)}-h^{\nu-1}\frac{\Gamma\left(\frac{t+\nu
h-s}{h}+1\right)}{\Gamma\left(\frac{t+\nu
h-s}{h}+1-(\nu-1)\right)}\right]\\
&\qquad +\left(t+\nu h -
\sigma(s)\right)_h^{(\nu-1)}f^{\Delta}(s)\\
&=f(s)\left[h^{\nu-2}\left[\frac{\Gamma(\frac{t+\nu
h-s}{h})}{\Gamma(\frac{t-s}{h}+1)}-\frac{\Gamma(\frac{t+\nu
h-s}{h}+1)}{\Gamma(\frac{t-s}{h}+2)}\right]\right]+\left(t+\nu h -
\sigma(s)\right)_h^{(\nu-1)}f^{\Delta}(s)\\
&=f(s)h^{\nu-2}\frac{\Gamma(\frac{t+\nu
h-s-h}{h}+1)}{\Gamma(\frac{t-s+\nu h-h}{h}+1-(\nu-2))}(-(\nu-1))+
\left(t+\nu h - \sigma(s)\right)_h^{(\nu-1)}f^{\Delta}(s)\\
&=-(\nu-1)(t+\nu h -\sigma(s))_h^{(\nu-2)}f(s)+\left(t+\nu h
- \sigma(s)\right)_h^{(\nu-1)}f^{\Delta}(s) \, ,
\end{split}
\end{equation*}
where the first equality follows directly from \eqref{deltaderpartes2}.
\end{proof}

\begin{remark}
Given an arbitrary $t\in\mathbb{T}^\kappa$
it is easy to prove, in a similar way as in the proof of
Lemma~\ref{lemma:tl}, the following equality analogous to \eqref{proddiff}:
for all $s\in\mathbb{T}^\kappa$
\begin{multline}
\label{eq:semlhante}
\Delta_{s,h}\left((s+\nu h-\sigma(t))_h^{(\nu-1)}f(s))\right)\\
=(\nu-1)(s+\nu h-\sigma(t))_h^{(\nu-2)}f^\sigma(s) + (s+\nu h-\sigma(t))_h^{(\nu-1)}f^{\Delta}(s) \, .
\end{multline}
\end{remark}
\begin{proof}(of Theorem~\ref{thm2})
From Lemma~\ref{lemma:tl} we obtain that
\begin{equation}
\label{naosei}
\begin{split}
{_a}\Delta_{h}^{-\nu} & f^{\Delta}(t+\nu h)
= h^\nu f^\Delta(t)+\frac{\nu}{\Gamma(\nu+1)}\int_{a}^{t}(t+\nu
h-\sigma(s))_h^{(\nu-1)}f^{\Delta}(s)\Delta s\\
&=h^\nu f^\Delta(t)+\frac{\nu}{\Gamma(\nu+1)}\left[(t+\nu
h-s)_h^{(\nu-1)}f(s)\right]_{s=a}^{s=t}\\
&\qquad +\frac{\nu}{\Gamma(\nu+1)}\int_{a}^{\sigma(t)}(\nu-1)(t+\nu
h-\sigma(s))_h^{(\nu-2)}
f(s)\Delta s\\
&=-\frac{\nu(t+\nu h-a)_h^{(\nu-1)}}{\Gamma(\nu+1)}f(a)
+h^{\nu}f^\Delta(t)+\nu h^{\nu-1}f(t)\\
&\qquad +\frac{\nu}{\Gamma(\nu+1)}\int_{a}^{t}(\nu-1)(t+\nu
h-\sigma(s))_h^{(\nu-2)} f(s)\Delta s.
\end{split}
\end{equation}
We now show that $(_a\Delta_h^{-\nu}f(t+\nu h))^{\Delta}$ equals \eqref{naosei}:
\begin{equation*}
\begin{split}
(_a\Delta_h^{-\nu} & f(t+\nu h))^\Delta
= \frac{1}{h}\left[h^\nu f(\sigma(t))+\frac{\nu}{\Gamma(\nu+1)}\int_{a}^{\sigma(t)}(\sigma(t)+\nu
h-\sigma(s))_h^{(\nu-1)}
f(s)\Delta s\right.\\
&\qquad \left.-h^\nu f(t)-\frac{\nu}{\Gamma(\nu+1)}\int_{a}^{t}(t+\nu h-\sigma(s))_h^{(\nu-1)} f(s)\Delta s\right]\\
&=h^\nu f^\Delta(t)+\frac{\nu}{h\Gamma(\nu+1)}\left[\int_{a}^{t}(\sigma(t)+\nu
h-\sigma(s))_h^{(\nu-1)}
f(s)\Delta s\right.\\
&\qquad\left.-\int_{a}^{t}(t+\nu h-\sigma(s))_h^{(\nu-1)}
f(s)\Delta s\right]+h^{\nu-1}\nu f(t)\\
&=h^\nu f^\Delta(t)+\frac{\nu}{\Gamma(\nu+1)}\int_{a}^{t}\Delta_{t,h}\left((t+\nu
h -\sigma(s))_h^{(\nu-1)}
\right)f(s)\Delta s+h^{\nu-1}\nu f(t)\\
&=h^\nu f^\Delta(t)+\frac{\nu}{\Gamma(\nu+1)}\int_{a}^{t}(\nu-1)(t+\nu
h-\sigma(s))_h^{(\nu-2)} f(s)\Delta s+\nu h^{\nu-1}f(t) \, .
\end{split}
\end{equation*}
\end{proof}

Follows the counterpart of Theorem~\ref{thm2}
for the right fractional $h$-sum:
\begin{theorem}
\label{thm3}
Let $f\in\mathcal{F}_\mathbb{T}$ and $\nu\geq 0$.
For all $t\in\mathbb{T}^\kappa$ we have
\begin{equation}
\label{naosei12}
{_h}\Delta_{\rho(b)}^{-\nu}
f^{\Delta}(t-\nu h)=\frac{\nu}{\Gamma(\nu+1)}(b+\nu
h-\sigma(t))_h^{(\nu-1)}f(b)+(_h\Delta_b^{-\nu}f(t-\nu h))^{\Delta} \, .
\end{equation}
\end{theorem}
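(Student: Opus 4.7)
The plan is to mirror the proof of Theorem~\ref{thm2}, adapting each ingredient to the right-fractional setting. The three main tools are: the representation \eqref{seila1} for ${_h}\Delta_b^{-\nu}$, the Leibniz-type identity \eqref{eq:semlhante} (which plays the role of \eqref{proddiff}), and the time-scale identity $\sigma(\rho(b))=b$, valid on $(h\mathbb{Z})_a$ because the graininess is constant.

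First I expand the left-hand side of \eqref{naosei12} via \eqref{seila1}, with $b$ replaced by $\rho(b)$ so that the upper limit of the integral becomes $\sigma(\rho(b))=b$, obtaining
$${_h}\Delta_{\rho(b)}^{-\nu}f^\Delta(t-\nu h)=h^\nu f^\Delta(t)+\frac{\nu}{\Gamma(\nu+1)}\int_{\sigma(t)}^{b}(s+\nu h-\sigma(t))_h^{(\nu-1)}f^\Delta(s)\Delta s.$$
I then rearrange \eqref{eq:semlhante} to isolate $(s+\nu h-\sigma(t))_h^{(\nu-1)}f^\Delta(s)$ and integrate over $[\sigma(t),b]$. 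The telescoping piece evaluates to $(b+\nu h-\sigma(t))_h^{(\nu-1)}f(b)-(\nu h)_h^{(\nu-1)}f(\sigma(t))$; using $(\nu h)_h^{(\nu-1)}=h^{\nu-1}\Gamma(\nu+1)$ the lower boundary contributes $-\nu h^{\nu-1}f(\sigma(t))$, while the upper boundary gives precisely the boundary correction $\frac{\nu}{\Gamma(\nu+1)}(b+\nu h-\sigma(t))_h^{(\nu-1)}f(b)$ stated in \eqref{naosei12}. What remains is the residual integral $-\frac{\nu(\nu-1)}{\Gamma(\nu+1)}\int_{\sigma(t)}^{b}(s+\nu h-\sigma(t))_h^{(\nu-2)}f^\sigma(s)\Delta s$.

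Second, I compute $({_h}\Delta_b^{-\nu}f(t-\nu h))^\Delta$ as the $h$-difference quotient $\frac{1}{h}\bigl[{_h}\Delta_b^{-\nu}f(\sigma(t)-\nu h)-{_h}\Delta_b^{-\nu}f(t-\nu h)\bigr]$, expanding each piece via \eqref{seila1}. The leading $h^\nu f(\cdot)$ parts combine to $h^\nu f^\Delta(t)$; the difference of integrals splits as the single-cell integral over $[\sigma(t),\sigma^2(t))$, which after division by $h$ yields $-\nu h^{\nu-1}f(\sigma(t))$, plus the integral over $[\sigma^2(t),\sigma(b)]$ whose integrand is $\Delta_{t,h}$ applied to the kernel $(s+\nu h-\sigma(t))_h^{(\nu-1)}$. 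A direct calculation (analogous to Lemma~\ref{lem:tl}, but in the variable $t$ and with the opposite sign) gives $\Delta_{t,h}(s+\nu h-\sigma(t))_h^{(\nu-1)}=-(\nu-1)(s+\nu h-\sigma^2(t))_h^{(\nu-2)}$, producing the residual $-\frac{\nu(\nu-1)}{\Gamma(\nu+1)}\int_{\sigma^2(t)}^{\sigma(b)}(s+\nu h-\sigma^2(t))_h^{(\nu-2)}f(s)\Delta s$.

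Matching the two sides reduces to identifying the two residual integrals. This is achieved by the summation shift $s\mapsto\sigma(s)$ in the $f^\sigma$-integral from the first step, which moves the limits from $[\sigma(t),b]$ to $[\sigma^2(t),\sigma(b)]$ and converts $(s+\nu h-\sigma(t))_h^{(\nu-2)}f^\sigma(s)$ into $(s+\nu h-\sigma^2(t))_h^{(\nu-2)}f(s)$. Once this identification is made, the $h^\nu f^\Delta(t)$ and the $-\nu h^{\nu-1}f(\sigma(t))$ contributions agree on both sides, and the only discrepancy is the upper-boundary term $\frac{\nu}{\Gamma(\nu+1)}(b+\nu h-\sigma(t))_h^{(\nu-1)}f(b)$, giving \eqref{naosei12}. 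The main obstacle is this bookkeeping of $\sigma$-shifts--reconciling the $f^\sigma$-integral produced by \eqref{eq:semlhante} with the $f$-integral produced by differentiating the kernel in $t$--which goes through cleanly on the constant-graininess lattice $(h\mathbb{Z})_a$ but (consistently with the authors' remarks in the introduction) is precisely where an extension to an arbitrary time scale would break down.
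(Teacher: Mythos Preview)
Your proof is correct and follows essentially the same route as the paper's own argument: expand ${_h}\Delta_{\rho(b)}^{-\nu}f^\Delta(t-\nu h)$ via \eqref{seila1} and integrate by parts using \eqref{eq:semlhante} to obtain \eqref{naosei99}, then compute $({_h}\Delta_b^{-\nu}f(t-\nu h))^\Delta$ as an $h$-difference quotient and identify the kernel difference with $\Delta_{t,h}$ applied to $(s+\nu h-\sigma(t))_h^{(\nu-1)}$. Your explicit handling of the index shift $s\mapsto\sigma(s)$ that reconciles the $f^\sigma$-integral coming from \eqref{eq:semlhante} with the $f$-integral coming from the $t$-derivative of the kernel is in fact more carefully stated than in the paper, where that identification is made tacitly in the last displayed line.
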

\begin{proof}
From \eqref{eq:semlhante} we obtain from integration by parts
(item 2 of Lemma~\ref{integracao:partes}) that
\begin{equation}
\label{naosei99}
\begin{split}
{_h}\Delta_{\rho(b)}^{-\nu} & f^{\Delta}(t-\nu h)
=\frac{\nu(b+\nu h-\sigma(t))_h^{(\nu-1)}}{\Gamma(\nu+1)}f(b)
+ h^\nu f^\Delta(t) -\nu h^{\nu-1}f(\sigma(t))\\
&\qquad -\frac{\nu}{\Gamma(\nu+1)}\int_{\sigma(t)}^{b}(\nu-1)(s+\nu
h-\sigma(t))_h^{(\nu-2)} f^\sigma(s)\Delta s.
\end{split}
\end{equation}
We show that $(_h\Delta_b^{-\nu}f(t-\nu h))^{\Delta}$ equals \eqref{naosei99}:
\begin{equation*}
\begin{split}
(_h&\Delta_b^{-\nu} f(t-\nu h))^{\Delta}\\
&=h^{\nu}f^\Delta(t)+\frac{\nu}{h\Gamma(\nu+1)}\left[\int_{\sigma^2(t)}^{\sigma(b)}(s+\nu
h-\sigma^2(t)))_h^{(\nu-1)}
f(s)\Delta s\right.\\
&\qquad \left.-\int_{\sigma^2(t)}^{\sigma(b)}(s+\nu h-\sigma(t))_h^{(\nu-1)}
f(s)\Delta s\right]-\nu h^{\nu-1} f(\sigma(t))\\
&=h^{\nu}f^\Delta(t)+\frac{\nu}{\Gamma(\nu+1)}\int_{\sigma^2(t)}^{\sigma(b)}\Delta_{t,h}\left((s+\nu
h-\sigma(t))_h^{(\nu-1)}\right)
f(s)\Delta s-\nu h^{\nu-1} f(\sigma(t))\\
&=h^{\nu}f^\Delta(t)-\frac{\nu}{\Gamma(\nu+1)}\int_{\sigma^2(t)}^{\sigma(b)}(\nu-1)(s+\nu
h-\sigma^2(t))_h^{(\nu-2)}
f(s)\Delta s-\nu h^{\nu-1} f(\sigma(t))\\
&=h^{\nu}f^\Delta(t)-\frac{\nu}{\Gamma(\nu+1)}\int_{\sigma(t)}^{b}(\nu-1)(s+\nu
h-\sigma(t))_h^{(\nu-2)} f(s)\Delta s-\nu h^{\nu-1} f(\sigma(t)).
\end{split}
\end{equation*}
\end{proof}

\begin{definition}
\label{def1}
Let $0<\alpha\leq 1$ and set $\gamma := 1-\alpha$. The \emph{left
fractional difference} $_a\Delta_h^\alpha f(t)$ and the
\emph{right fractional difference} $_h\Delta_b^\alpha f(t)$
of order $\alpha$ of a function $f\in\mathcal{F}_\mathbb{T}$ are defined as
\begin{equation*}
_a\Delta_h^\alpha f(t) := (_a\Delta_h^{-\gamma}f(t+\gamma h))^{\Delta}\ \text{ and } \
_h\Delta_b^\alpha f(t):=-(_h\Delta_b^{-\gamma}f(t-\gamma h))^{\Delta}
\end{equation*}
for all $t\in\mathbb{T}^\kappa$.
\end{definition}


\section{Main Results}
\label{sec1}

Our aim is to introduce the $h$-fractional problem of the calculus of
variations and to prove corresponding necessary optimality
conditions. In order to obtain an Euler-Lagrange
type equation (\textrm{cf.} Theorem~\ref{thm0}) we first prove a
fractional formula of $h$-summation by parts.


\subsection{Fractional $h$-summation by parts}

A big challenge was to discover a
fractional $h$-summation by parts formula within the time scale setting.
Indeed, there is no clue of what such a formula should be.
We found it eventually, making use of the following lemma.

\begin{lemma}
\label{lem1}
Let $f$ and $k$ be two functions defined on $\mathbb{T}^\kappa$
and $\mathbb{T}^{\kappa^2}$, respectively,
and $g$ a function defined on
$\mathbb{T}^\kappa\times\mathbb{T}^{\kappa^2}$.
The following equality holds:
\begin{equation*}
\int_{a}^{b}f(t)\left[\int_{a}^{t}g(t,s)k(s)\Delta s\right]\Delta
t=\int_{a}^{\rho(b)}k(t)\left[\int_{\sigma(t)}^{b}g(s,t)f(s)\Delta
s\right]\Delta t \, .
\end{equation*}
\end{lemma}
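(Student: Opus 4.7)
The plan is to prove the identity by induction on $n\in\mathbb{N}_0$, where $b=a+nh$, using only the additivity of the delta integral together with the elementary identity $\int_{c}^{\sigma(c)} F(t)\Delta t = \mu(c) F(c) = h F(c)$. This keeps every step within standard time-scale integral calculus on $\mathbb{T}=(h\mathbb{Z})_a$ instead of reducing the two $\Delta$-integrals to an explicit double Riemann sum.

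For the base case $b=\sigma(a)$, both sides vanish: on the left the inner integral $\int_a^a g(a,s)k(s)\Delta s$ is zero; on the right, since every point of $(h\mathbb{Z})_a$ is left-scattered, $\rho(b)=\rho(\sigma(a))=a$, so the outer integral $\int_a^a$ is zero as well.

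For the inductive step I assume the identity for $b$ and derive it for $b':=\sigma(b)=b+h$. Using additivity at $t=b$, the left-hand side at $b'$ splits as the left-hand side at $b$ plus an extra term $hf(b)\int_a^b g(b,s)k(s)\Delta s$; the induction hypothesis rewrites the first piece as $\int_a^{\rho(b)} k(t)\int_{\sigma(t)}^b g(s,t)f(s)\Delta s\,\Delta t$. On the right-hand side $\rho(b')=b$, so the outer integral runs up to $b$; I split the inner integral via $\int_{\sigma(t)}^{b'} = \int_{\sigma(t)}^{b} + hg(b,t)f(b)$ and then split the outer integral at $t=\rho(b)$. The would-be boundary contribution at $t=\rho(b)$ contains $\int_{\sigma(\rho(b))}^{b} g(s,\rho(b))f(s)\Delta s = \int_b^b = 0$ and drops out, leaving $\int_a^{\rho(b)} k(t)\int_{\sigma(t)}^{b} g(s,t)f(s)\Delta s\,\Delta t + hf(b)\int_a^b k(t)g(b,t)\Delta t$, which matches the reorganised left-hand side after the trivial renaming $s\leftrightarrow t$ in the extra term.

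The main obstacle is the careful bookkeeping of the $\sigma$ and $\rho$ shifts: checking that $\sigma(\rho(b))=b$ kills the unwanted boundary term when splitting the outer integral, and that $\rho(\sigma(b))=b$ correctly identifies the new outer upper limit after advancing $b$. The structural observation driving the argument is that $\sigma(t)$ in the inner lower limit and $\rho(b)$ in the outer upper limit are calibrated precisely so that the new diagonal contribution picked up on the left when $b$ advances to $\sigma(b)$ is absorbed on the right by extending the inner integration from $b$ to $b'$.
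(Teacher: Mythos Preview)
Your proof is correct. The induction is clean, and the key cancellation at $t=\rho(b)$ (where $\int_{\sigma(\rho(b))}^{b}=\int_b^b=0$) is exactly the point that makes the bookkeeping close. The only cosmetic slip is that you announce induction on $n\in\mathbb{N}_0$ but then start the base case at $n=1$ (\,$b=\sigma(a)$\,); since the paper already assumes $b=a+kh$ with $k\in\mathbb{N}$, this is harmless.

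The paper's proof is genuinely different in style: it simply expands both delta integrals as finite $h$-sums, writes the left-hand side as $h^2\sum_{i}f(ih)\sum_{j<i}g(ih,jh)k(jh)$, and then reverses the order of summation (dressed up with a row vector $R$ and column vectors $C_1,\ldots$) to obtain $h^2\sum_j k(jh)\sum_{i>j}g(ih,jh)f(ih)$, which is the right-hand side. In other words, the paper treats the lemma as Fubini for triangular double sums. Your induction avoids ever leaving the $\Delta$-integral notation and instead peels off one grid point at a time; this makes the role of the $\sigma$/$\rho$ shifts in the limits more transparent, but at the cost of not exhibiting the underlying ``swap $i$ and $j$'' mechanism as directly. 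Both arguments are short and elementary; the paper's is marginally more revealing of \emph{why} the identity holds, while yours is more in the spirit of time-scale calculus and would generalise more readily if one wanted an abstract version not tied to writing out explicit sums.
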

\begin{proof}
Consider the matrices
$R = \left[ f(a+h), f(a+2h), \cdots, f(b-h) \right]$,
\begin{equation*}
C_1 = \left[
\begin{array}{c}
g(a+h,a)k(a) \\
g(a+2h,a)k(a)+g(a+2h,a+h)k(a+h) \\
\vdots \\
g(b-h,a)k(a)+g(b-h,a+h)k(a+h)+\cdots+ g(b-h,b-2h)k(b-2h)
\end{array}
\right]
\end{equation*}
\begin{gather*}
C_2 = \left[
\begin{array}{c}
g(a+h,a) \\
g(a+2h,a) \\
\vdots \\
g(b-h,a)  \end{array} \right], \ \  C_3 = \left[
\begin{array}{c}
0 \\
g(a+2h,a+h) \\
\vdots \\
g(b-h,a+h)  \end{array} \right], \ \ C_4 = \left[
\begin{array}{c}
0 \\
0 \\
\vdots \\
g(b-h,b-2h)
\end{array}
\right] .
\end{gather*}
Direct calculations show that
\begin{equation*}
\begin{split}
\int_{a}^{b}&f(t)\left[\int_{a}^{t}g(t,s)k(s)\Delta s\right]\Delta t
=h^2\sum_{i=a/h}^{b/h-1} f(ih)\sum_{j=a/h}^{i-1}g(ih,jh)k(jh) = h^2 R \cdot C_1\\
&=h^2 R \cdot \left[k(a) C_2 + k(a+h)C_3 +\cdots +k(b-2h) C_4 \right]\\
&=h^2\left[k(a)\sum_{j=a/h+1}^{b/h-1}g(jh,a)f(jh)+k(a+h)\sum_{j=a/h+2}^{b/h-1}g(jh,a+h)f(jh)\right.\\
&\left.\qquad \qquad +\cdots+k(b-2h)\sum_{j=b/h-1}^{b/h-1}g(jh,b-2h)f(jh)\right]\\
&=\sum_{i=a/h}^{b/h-2}k(ih)h\sum_{j=\sigma(ih)/h}^{b/h-1}g(jh,ih)f(jh) h
=\int_a^{\rho(b)}k(t)\left[\int_{\sigma(t)}^b g(s,t)f(s)\Delta s\right]\Delta t.
\end{split}
\end{equation*}
\end{proof}

\begin{theorem}[fractional $h$-summation by parts]\label{teor1}
Let $f$ and $g$ be real valued functions defined on $\mathbb{T}^\kappa$
and $\mathbb{T}$, respectively. Fix $0<\alpha\leq 1$ and put
$\gamma := 1-\alpha$. Then,
\begin{multline}
\label{delf:sumPart}
\int_{a}^{b}f(t)_a\Delta_h^\alpha g(t)\Delta t=h^\gamma
f(\rho(b))g(b)-h^\gamma
f(a)g(a)+\int_{a}^{\rho(b)}{_h\Delta_{\rho(b)}^\alpha
f(t)g^\sigma(t)}\Delta t\\
+\frac{\gamma}{\Gamma(\gamma+1)}g(a)\left(\int_{a}^{b}(t+\gamma
h-a)_h^{(\gamma-1)}f(t)\Delta t -\int_{\sigma(a)}^{b}(t+\gamma
h-\sigma(a))_h^{(\gamma-1)}f(t)\Delta t\right).
\end{multline}
\end{theorem}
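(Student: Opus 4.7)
The plan is to reduce the fractional summation-by-parts formula to two ingredients we already have: the ``derivative of a fractional sum'' identity (Theorem~\ref{thm2}), which lets me replace ${_a\Delta_h^\alpha}g$ by a fractional sum of $g^\Delta$ up to an explicit boundary term, and the double-summation swap (Lemma~\ref{lem1}), which turns a convolution against $g^\Delta$ on $[a,t]$ into a convolution against $f$ on $[\sigma(t),b]$. Once these two reductions are made, a single application of ordinary delta integration by parts (part~2 of Lemma~\ref{integracao:partes}) brings out the boundary contributions in \eqref{delf:sumPart}, and the surviving integral can be rewritten in terms of $_h\Delta_{\rho(b)}^\alpha f$ via Definition~\ref{def1}. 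Concretely, Theorem~\ref{thm2} with $\nu=\gamma$ yields ${_a\Delta_h^\alpha g(t)}={_a\Delta_h^{-\gamma}}g^\Delta(t+\gamma h)+\frac{\gamma}{\Gamma(\gamma+1)}(t+\gamma h-a)_h^{(\gamma-1)}g(a)$, so the left-hand side of \eqref{delf:sumPart} splits as $A+B$, with $B:=\frac{\gamma g(a)}{\Gamma(\gamma+1)}\int_a^b(t+\gamma h-a)_h^{(\gamma-1)}f(t)\Delta t$ already matching the first half of the $g(a)$-bracket on the right.

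To handle $A$, I would expand ${_a\Delta_h^{-\gamma}}g^\Delta(t+\gamma h)$ by its defining formula; the inner $h$-sum runs up to $\sigma(t)$. Splitting $\int_a^{\sigma(t)}=\int_a^{t}+\int_t^{\sigma(t)}$ and using $((\gamma-1)h)_h^{(\gamma-1)}=h^{\gamma-1}\Gamma(\gamma)$ makes the single-term tail collapse to $h^\gamma\int_a^b f(t)g^\Delta(t)\Delta t$, while Lemma~\ref{lem1} (applied with kernel $\frac{1}{\Gamma(\gamma)}(t+\gamma h-\sigma(s))_h^{(\gamma-1)}$ and $k=g^\Delta$) transforms the remaining piece into $\int_a^{\rho(b)}F(t)g^\Delta(t)\Delta t$, where $F(t):=\frac{1}{\Gamma(\gamma)}\int_{\sigma(t)}^b(s+\gamma h-\sigma(t))_h^{(\gamma-1)}f(s)\Delta s$. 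Part~2 of Lemma~\ref{integracao:partes} applied to each of these two integrals produces boundary contributions $h^\gamma f(b)g(b)-h^\gamma f(a)g(a)$ and $-F(a)g(a)$ (since $F(\rho(b))=0$), together with the combined integral $-\int_a^{\rho(b)}(h^\gamma f+F)^\Delta(t)g^\sigma(t)\Delta t$. Isolating the $t=\rho(b)$ tail of $h^\gamma\int_a^b f^\Delta(t)g^\sigma(t)\Delta t$ telescopes $h^\gamma f(b)g(b)$ into the desired $h^\gamma f(\rho(b))g(b)$, and the term $-F(a)g(a)$ combined with $B$ (using $\frac{1}{\Gamma(\gamma)}=\frac{\gamma}{\Gamma(\gamma+1)}$) reproduces exactly the bracketed $g(a)$-expression in \eqref{delf:sumPart}.

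What remains is to identify the surviving integrand $-(h^\gamma f+F)^\Delta$ with $_h\Delta_{\rho(b)}^\alpha f$. By Definition~\ref{def1} this reduces to proving the pointwise identity $h^\gamma f(t)+F(t)={_h\Delta_{\rho(b)}^{-\gamma}}f(t-\gamma h)$, and I expect this to be the main obstacle of the proof: it is the only genuinely fractional step, and it is where the asymmetric shifts $\pm\gamma h$ built into Definitions~\ref{def0} and~\ref{def1} must be reconciled. My plan is to verify it directly from the defining formula for the right fractional $h$-sum: substituting $\tau=t-\gamma h$ turns the kernel $(s-\sigma(\tau))_h^{(\gamma-1)}$ into $(s+\gamma h-\sigma(t))_h^{(\gamma-1)}$ and the lower limit into $t$, so that peeling off the $s=t$ term via $((\gamma-1)h)_h^{(\gamma-1)}=h^{\gamma-1}\Gamma(\gamma)$ produces exactly the expression $h^\gamma f(t)+F(t)$. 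Combining the resulting form of $A$ with $B$ then delivers \eqref{delf:sumPart}.
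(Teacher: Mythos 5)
Your proposal is correct and follows essentially the same route as the paper's proof: apply Theorem~\ref{thm2} to split off the $g(a)$-term, expand the left fractional sum of $g^{\Delta}$ (your peeling-off of the $s=t$ term via $((\gamma-1)h)_h^{(\gamma-1)}=h^{\gamma-1}\Gamma(\gamma)$ is exactly identity \eqref{seila1}), swap the double sum with Lemma~\ref{lem1}, integrate by parts with Lemma~\ref{integracao:partes}, and recognize $h^{\gamma}f(t)+F(t)$ as ${_h\Delta_{\rho(b)}^{-\gamma}}f(t-\gamma h)$ so that Definition~\ref{def1} turns the surviving integrand into ${_h\Delta_{\rho(b)}^{\alpha}}f$. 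The only (cosmetic) difference is that you integrate the $h^{\gamma}fg^{\Delta}$ piece by parts over all of $[a,b]$, which momentarily introduces the undefined value $f(b)$ (with $f$ living on $\mathbb{T}^{\kappa}$) before it cancels against the $t=\rho(b)$ tail; the paper sidesteps this by first extracting $h^{\gamma}f(\rho(b))[g(b)-g(\rho(b))]$ and performing a single integration by parts on $[a,\rho(b)]$ after recombining $h^{\gamma}f+F$ into ${_h\Delta_{\rho(b)}^{-\gamma}}f(\cdot-\gamma h)$.
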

\begin{proof}
By \eqref{naosei1} we can write
\begin{equation}
\label{rui0}
\begin{split}
\int_{a}^{b} &f(t)_a\Delta_h^\alpha g(t)\Delta t
=\int_{a}^{b}f(t)(_a\Delta_h^{-\gamma} g(t+\gamma h))^{\Delta}\Delta t\\
&=\int_{a}^{b}f(t)\left[_a\Delta_h^{-\gamma}
g^{\Delta}(t+\gamma h)+\frac{\gamma}{\Gamma(\gamma+1)}(t+\gamma h-a)_h^{(\gamma-1)}g(a)\right]\Delta t\\
&=\int_{a}^{b}f(t)_a\Delta_h^{-\gamma}g^{\Delta}(t+\gamma h)\Delta t
+\int_{a}^{b}\frac{\gamma}{\Gamma(\gamma+1)}(t+\gamma h-a)_h^{(\gamma-1)}f(t)g(a)\Delta t.
\end{split}
\end{equation}
Using \eqref{seila1} we get
\begin{equation*}
\begin{split}
\int_{a}^{b} &f(t)_a\Delta_h^{-\gamma} g^{\Delta}(t+\gamma h) \Delta t\\
&=\int_{a}^{b}f(t)\left[h^\gamma g^{\Delta}(t)
+ \frac{\gamma}{\Gamma(\gamma+1)}\int_{a}^{t}(t+\gamma h
-\sigma(s))_h^{(\gamma-1)} g^{\Delta}(s)\Delta s\right]\Delta t\\
&=h^\gamma\int_{a}^{b}f(t)g^{\Delta}(t)\Delta
t+\frac{\gamma}{\Gamma(\gamma+1)}\int_{a}^{\rho(b)}
g^{\Delta}(t)\int_{\sigma(t)}^{b}(s+\gamma h-\sigma(t))_h^{(\gamma-1)}f(s)\Delta s \Delta t\\
&=h^\gamma f(\rho(b))[g(b)-g(\rho(b))]+\int_{a}^{\rho(b)}
g^{\Delta}(t)_h\Delta_{\rho(b)}^{-\gamma} f(t-\gamma h)\Delta t,
\end{split}
\end{equation*}
where the third equality follows by Lemma~\ref{lem1}. We proceed to develop the right hand side of the last equality as follows:
\begin{equation*}
\begin{split}
h^\gamma & f(\rho(b))[g(b)-g(\rho(b))]+\int_{a}^{\rho(b)}
g^{\Delta}(t)_h\Delta_{\rho(b)}^{-\gamma} f(t-\gamma h)\Delta t\\
&=h^\gamma f(\rho(b))[g(b)-g(\rho(b))] +\left[g(t)_h\Delta_{\rho(b)}^{-\gamma}
f(t-\gamma h)\right]_{t=a}^{t=\rho(b)}\\
&\quad -\int_{a}^{\rho(b)} g^\sigma(t)(_h\Delta_{\rho(b)}^{-\gamma} f(t-\gamma h))^{\Delta}\Delta t\\
&=h^\gamma f(\rho(b))g(b)-h^\gamma f(a)g(a)\\
&\quad -\frac{\gamma}{\Gamma(\gamma+1)}g(a)\int_{\sigma(a)}^{b}(s+\gamma h-\sigma(a))_h^{(\gamma-1)}f(s)\Delta s
+\int_{a}^{\rho(b)}{\left(_h\Delta_{\rho(b)}^\alpha f(t)\right)g^\sigma(t)}\Delta t,
\end{split}
\end{equation*}
where the first equality follows from Lemma~\ref{integracao:partes}.
Putting this into (\ref{rui0}) we get \eqref{delf:sumPart}.
\end{proof}


\subsection{Necessary optimality conditions}

We begin to fix two arbitrary real numbers $\alpha$ and $\beta$ such
that $\alpha,\beta\in(0,1]$. Further, we put $\gamma := 1-\alpha$
and $\nu :=1-\beta$.

Let a function
$L(t,u,v,w):\mathbb{T}^\kappa\times\mathbb{R}\times\mathbb{R}\times\mathbb{R}\rightarrow\mathbb{R}$
be given. We consider the problem of minimizing (or maximizing) a functional
$\mathcal{L}:\mathcal{F}_\mathbb{T}\rightarrow\mathbb{R}$ subject to
given boundary conditions:
\begin{equation}
\label{naosei7}
\mathcal{L}(y(\cdot))=\int_{a}^{b}L(t,y^{\sigma}(t),{_a}\Delta_h^\alpha
y(t),{_h}\Delta_b^\beta y(t))\Delta t \longrightarrow \min,
\ y(a)=A, \ y(b)=B \, .
\end{equation}
Our main aim is to derive necessary optimality
conditions for problem \eqref{naosei7}.
\begin{definition}
For $f\in\mathcal{F}_\mathbb{T}$ we define the norm
$$\|f\|=\max_{t\in\mathbb{T}^\kappa}|f^\sigma(t)|+\max_{t\in\mathbb{T}^\kappa}|_a\Delta_h^\alpha
f(t)|+\max_{t\in\mathbb{T}^\kappa}|_h\Delta_b^\beta f(t)|.$$
A function $\hat{y}\in\mathcal{F}_\mathbb{T}$ with $\hat{y}(a)=A$ and
$\hat{y}(b)=B$ is called a local minimum for problem
\eqref{naosei7} provided there exists $\delta>0$ such that
$\mathcal{L}(\hat{y})\leq\mathcal{L}(y)$ for all $y\in\mathcal{F}_\mathbb{T}$
with $y(a)=A$ and $y(b)=B$ and $\|y-\hat{y}\|<\delta$.
\end{definition}

\begin{definition}
A function $\eta\in\mathcal{F}_\mathbb{T}$ is called an admissible variation
provided $\eta \neq 0$ and $\eta(a)=\eta(b)=0$.
\end{definition}

From now on we assume that the second-order partial
derivatives $L_{uu}$, $L_{uv}$, $L_{uw}$, $L_{vw}$,
$L_{vv}$, and $L_{ww}$ exist and are continuous.


\subsubsection{First order optimality condition}

Next theorem gives a first order necessary condition for
problem \eqref{naosei7}, \textrm{i.e.}, an Euler-Lagrange
type equation for the fractional $h$-difference setting.
\begin{theorem}[The $h$-fractional Euler-Lagrange equation for problem \eqref{naosei7}]
\label{thm0}
If $\hat{y}\in\mathcal{F}_\mathbb{T}$ is a
local minimum for problem \eqref{naosei7}, then the
equality
\begin{equation}
\label{EL}
L_u[\hat{y}](t) +{_h}\Delta_{\rho(b)}^\alpha
L_v[\hat{y}](t)+{_a}\Delta_h^\beta L_w[\hat{y}](t)=0
\end{equation}
holds for all $t\in\mathbb{T}^{\kappa^2}$ with operator $[\cdot]$ defined by
$[y](s) =(s,y^{\sigma}(s),{_a}\Delta_s^\alpha y(s),{_s}\Delta_b^\beta y(s))$.
\end{theorem}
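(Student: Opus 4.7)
The plan is the standard direct method of the calculus of variations adapted to the fractional $h$-difference setting. First I take an arbitrary admissible variation $\eta\in\mathcal{F}_\mathbb{T}$ (so $\eta(a)=\eta(b)=0$) and consider the real function $\phi(\epsilon)=\mathcal{L}(\hat{y}+\epsilon\eta)$. Since the operators ${_a}\Delta_h^\alpha$ and ${_h}\Delta_b^\beta$ are linear in $y$ (immediate from Definitions~\ref{def0} and~\ref{def1}) and the integral is a finite $h$-sum, $\phi$ is smooth in $\epsilon$, and the minimality of $\hat{y}$ together with the smoothness assumption on $L$ forces $\phi'(0)=0$. The chain rule applied inside the $h$-sum converts this into
\begin{equation}
\label{eq:vanvar}
\int_a^b\bigl[L_u[\hat{y}](t)\,\eta^\sigma(t)+L_v[\hat{y}](t)\,{_a}\Delta_h^\alpha\eta(t)+L_w[\hat{y}](t)\,{_h}\Delta_b^\beta\eta(t)\bigr]\Delta t=0,
\end{equation}
valid for every admissible $\eta$.

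The next step is to transfer the fractional operators off $\eta$. Applying Theorem~\ref{teor1} to the second summand with $f=L_v[\hat{y}]$ and $g=\eta$, the two boundary terms $h^\gamma f(\rho(b))g(b)$ and $h^\gamma f(a)g(a)$ together with the whole $g(a)$-multiplied bracket vanish because $\eta(a)=\eta(b)=0$, so only $\int_a^{\rho(b)}{_h}\Delta_{\rho(b)}^\alpha L_v[\hat{y}](t)\,\eta^\sigma(t)\Delta t$ survives. For the third summand I need the companion of Theorem~\ref{teor1} for the right fractional difference, which is derived following the same pattern: combine the identity~\eqref{eq:semlhante}, Theorem~\ref{thm3}, the swapped version of Lemma~\ref{lem1} (with inner and outer limits interchanged), and ordinary $h$-integration by parts. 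Under $\eta(a)=\eta(b)=0$ the resulting formula likewise collapses to $\int_a^{\rho(b)}{_a}\Delta_h^\beta L_w[\hat{y}](t)\,\eta^\sigma(t)\Delta t$. The first summand in~\eqref{eq:vanvar} is trimmed from $[a,b]$ to $[a,\rho(b)]$ by peeling off its last node, whose contribution $h\,L_u[\hat{y}](\rho(b))\eta(b)$ is zero.

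Adding the three pieces gives
\begin{equation*}
\int_a^{\rho(b)}\bigl[L_u[\hat{y}](t)+{_h}\Delta_{\rho(b)}^\alpha L_v[\hat{y}](t)+{_a}\Delta_h^\beta L_w[\hat{y}](t)\bigr]\eta^\sigma(t)\Delta t=0
\end{equation*}
for every admissible $\eta$. As $t$ runs through $\mathbb{T}^{\kappa^2}$ the value $\eta^\sigma(t)=\eta(t+h)$ runs through the interior grid points $\{a+h,\ldots,\rho(b)\}$, and each of these can be prescribed independently while keeping $\eta(a)=\eta(b)=0$. A one-node bump variation (concentrated at a single interior point) therefore isolates the bracket at any chosen $t\in\mathbb{T}^{\kappa^2}$, yielding~\eqref{EL} pointwise. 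The main obstacle is not the variational mechanism itself but the careful bookkeeping in the fractional summation by parts: Theorem~\ref{teor1} carries several ``extra'' tail contributions beyond the familiar $h^\gamma f(\rho(b))g(b)-h^\gamma f(a)g(a)$ boundary pair, and establishing the right-sided analogue with the same clean cancellation under $\eta(a)=\eta(b)=0$ is what makes the derivation delicate.
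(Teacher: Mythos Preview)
Your proposal is correct and follows essentially the same route as the paper: set up $\Phi(\varepsilon)$, differentiate under the finite $h$-sum to obtain the first-variation identity, apply Theorem~\ref{teor1} to the $L_v$ term, treat the $L_w$ term via Theorem~\ref{thm3}, Lemma~\ref{lem1} (used in the reversed direction you describe), and ordinary $h$-integration by parts, and finish with the discrete fundamental lemma. The only cosmetic difference is that the paper does not formulate a standalone ``right-sided companion'' of Theorem~\ref{teor1}; it carries out those same steps inline within the proof of Theorem~\ref{thm0} (so your reference to~\eqref{eq:semlhante} is superfluous, as its content is already packaged in Theorem~\ref{thm3}).
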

\begin{proof}
Suppose that $\hat{y}(\cdot)$ is a local minimum of
$\mathcal{L}[\cdot]$. Let $\eta(\cdot)$ be an arbitrarily fixed
admissible variation and define a function
$\Phi:\left(-\frac{\delta}{\|\eta(\cdot)\|},\frac{\delta}{\|\eta(\cdot)\|}\right)\rightarrow\mathbb{R}$
by
\begin{equation}
\label{fi}
\Phi(\varepsilon)=\mathcal{L}[\hat{y}(\cdot)+\varepsilon\eta(\cdot)].
\end{equation}
This function has a minimum at $\varepsilon=0$, so we must have
$\Phi'(0)=0$, i.e.,
$$\int_{a}^{b}\left[L_u[\hat{y}](t)\eta^\sigma(t)
+L_v[\hat{y}](t){_a}\Delta_h^\alpha\eta(t)
+L_w[\hat{y}](t){_h}\Delta_b^\beta\eta(t)\right]\Delta t=0,$$ which we may
write equivalently as
\begin{multline}
\label{rui3}
h L_u[\hat{y}](t)\eta^\sigma(t)|_{t=\rho(b)}+\int_{a}^{\rho(b)}L_u[\hat{y}](t)\eta^\sigma(t)\Delta
t +\int_{a}^{b}L_v[\hat{y}](t){_a}\Delta_h^\alpha\eta(t)\Delta
t\\+\int_{a}^{b}L_w[\hat{y}](t){_h}\Delta_b^\beta\eta(t)\Delta t=0.
\end{multline}
Using Theorem~\ref{teor1} and the fact that $\eta(a)=\eta(b)=0$, we get
\begin{equation}
\label{naosei5}
\int_{a}^{b}L_v[\hat{y}](t){_a}\Delta_h^\alpha\eta(t)\Delta
t=\int_{a}^{\rho(b)}\left({_h}\Delta_{\rho(b)}^\alpha
\left(L_v[\hat{y}]\right)(t)\right)\eta^\sigma(t)\Delta t
\end{equation}
for the third term in \eqref{rui3}.
Using \eqref{naosei12} it follows that
\begin{equation}
\label{naosei4}
\begin{split}
\int_{a}^{b} & L_w[\hat{y}](t){_h}\Delta_b^\beta\eta(t)\Delta t\\=&-\int_{a}^{b}L_w[\hat{y}](t)({_h}\Delta_b^{-\nu}\eta(t-\nu h))^{\Delta}\Delta t\\
=&-\int_{a}^{b}L_w[\hat{y}](t)\left[{_h}\Delta_{\rho(b)}^{-\nu}
\eta^{\Delta}(t-\nu h)-\frac{\nu}{\Gamma(\nu+1)}(b+\nu h-\sigma(t))_h^{(\nu-1)}\eta(b)\right]\Delta t\\
=&-\int_{a}^{b}L_w[\hat{y}](t){_h}\Delta_{\rho(b)}^{-\nu}
\eta^{\Delta}(t-\nu h)\Delta t
+\frac{\nu\eta(b)}{\Gamma(\nu+1)}\int_{a}^{b}(b+\nu
h-\sigma(t))_h^{(\nu-1)}L_w[\hat{y}](t)\Delta t .
\end{split}
\end{equation}
We now use Lemma~\ref{lem1} to get
\begin{equation}
\label{naosei2}
\begin{split}
\int_{a}^{b} &L_w[\hat{y}](t){_h}\Delta_{\rho(b)}^{-\nu} \eta^{\Delta}(t-\nu h)\Delta t\\
&=\int_{a}^{b}L_w[\hat{y}](t)\left[h^\nu\eta^{\Delta}(t)+\frac{\nu}{\Gamma(\nu+1)}\int_{\sigma(t)}^{b}(s+\nu
h-\sigma(t))_h^{(\nu-1)} \eta^{\Delta}(s)\Delta s\right]\Delta t\\
&=\int_{a}^{b}h^\nu L_w[\hat{y}](t)\eta^{\Delta}(t)\Delta t\\
&\qquad +\frac{\nu}{\Gamma(\nu+1)}\int_{a}^{\rho(b)}\left[L_w[\hat{y}](t)\int_{\sigma(t)}^{b}(s+\nu
h-\sigma(t))_h^{(\nu-1)} \eta^{\Delta}(s)\Delta s\right]\Delta t\\
&=\int_{a}^{b}h^\nu L_w[\hat{y}](t)\eta^{\Delta}(t)\Delta t\\
&\qquad +\frac{\nu}{\Gamma(\nu+1)}\int_{a}^{b}\left[\eta^{\Delta}(t)\int_{a}^{t}(t+\nu h
-\sigma(s))_h^{(\nu-1)}L_w[\hat{y}](s)\Delta s\right]\Delta t\\
&=\int_{a}^{b}\eta^{\Delta}(t){_a}\Delta^{-\nu}_h \left(L_w[\hat{y}]\right)(t+\nu h)\Delta t.
\end{split}
\end{equation}
We apply again the time scale integration by parts formula
(Lemma~\ref{integracao:partes}), this
time to \eqref{naosei2}, to obtain,
\begin{equation}
\label{naosei3}
\begin{split}
\int_{a}^{b} & \eta^{\Delta}(t){_a}\Delta^{-\nu}_h
\left(L_w[\hat{y}]\right)(t+\nu h)\Delta t\\
&=\int_{a}^{\rho(b)}\eta^{\Delta}(t){_a}\Delta^{-\nu}_h
\left(L_w[\hat{y}]\right)(t+\nu h)\Delta t\\
&\qquad +(\eta(b)-\eta(\rho(b))){_a}\Delta^{-\nu}_h
\left(L_w[\hat{y}]\right)(t+\nu h)|_{t=\rho(b)}\\
&=\left[\eta(t){_a}\Delta^{-\nu}_h
\left(L_w[\hat{y}]\right)(t+\nu h)\right]_{t=a}^{t=\rho(b)}
-\int_{a}^{\rho(b)}\eta^\sigma(t)({_a}\Delta^{-\nu}_h
\left(L_w[\hat{y}]\right)(t+\nu h))^\Delta \Delta t\\
&\qquad +\eta(b){_a}\Delta^{-\nu}_h
\left(L_w[\hat{y}]\right)(t+\nu h)|_{t=\rho(b)}-\eta(\rho(b)){_a}\Delta^{-\nu}_h
\left(L_w[\hat{y}]\right)(t+\nu h)|_{t=\rho(b)}\\
&=\eta(b){_a}\Delta^{-\nu}_h
\left(L_w[\hat{y}]\right)(t+\nu h)|_{t=\rho(b)}-\eta(a){_a}\Delta^{-\nu}_h
\left(L_w[\hat{y}]\right)(t+\nu h)|_{t=a}\\
&\qquad -\int_{a}^{\rho(b)}\eta^\sigma(t){_a}\Delta^{\beta}_h
\left(L_w[\hat{y}]\right)(t)\Delta t.
\end{split}
\end{equation}
Since $\eta(a)=\eta(b)=0$ we obtain, from \eqref{naosei2} and
\eqref{naosei3}, that
$$\int_{a}^{b}L_w[\hat{y}](t){_h}\Delta_{\rho(b)}^{-\nu}
\eta^\Delta(t)\Delta t
=-\int_{a}^{\rho(b)}\eta^\sigma(t){_a}\Delta^{\beta}_h
\left(L_w[\hat{y}]\right)(t)\Delta t\, ,$$
and after inserting in \eqref{naosei4}, that
\begin{equation}
\label{naosei6}
\int_{a}^{b}L_w[\hat{y}](t){_h}\Delta_b^\beta\eta(t)\Delta t
=\int_{a}^{\rho(b)}\eta^\sigma(t){_a}\Delta^{\beta}_h
\left(L_w[\hat{y}]\right)(t) \Delta t.
\end{equation}
By \eqref{naosei5} and \eqref{naosei6} we may write \eqref{rui3} as
$$\int_{a}^{\rho(b)}\left[L_u[\hat{y}](t)
+{_h}\Delta_{\rho(b)}^\alpha \left(L_v[\hat{y}]\right)(t)+{_a}\Delta_h^\beta
\left(L_w[\hat{y}]\right)(t)\right]\eta^\sigma(t) \Delta t =0\, .$$
Since the values of $\eta^\sigma(t)$ are arbitrary for
$t\in\mathbb{T}^{\kappa^2}$, the Euler-Lagrange equation \eqref{EL}
holds along $\hat{y}$.
\end{proof}

The next result is a direct corollary of Theorem~\ref{thm0}.

\begin{corollary}[The $h$-Euler-Lagrange equation
-- \textrm{cf.}, \textrm{e.g.}, \cite{CD:Bohner:2004,RD}]
\label{ELCor}
Let $\mathbb{T}$ be the time scale $h \mathbb{Z}$, $h > 0$, with
the forward jump operator $\sigma$ and the delta derivative $\Delta$.
Assume $a, b \in \mathbb{T}$, $a < b$. If $\hat{y}$ is a solution
to the problem
\begin{equation*}
\mathcal{L}(y(\cdot))=\int_{a}^{b}L(t,y^{\sigma}(t),y^\Delta(t))\Delta t \longrightarrow \min,
\  y(a)=A, \  y(b)=B\, ,
\end{equation*}
then the equality
$L_u(t,\hat{y}^{\sigma}(t),\hat{y}^\Delta(t))-\left(L_v(t,\hat{y}^{\sigma}(t),\hat{y}^\Delta(t))\right)^\Delta =0$
holds for all $t\in\mathbb{T}^{\kappa^2}$.
\end{corollary}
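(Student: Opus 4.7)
The plan is to derive Corollary~\ref{ELCor} as the limiting instance of Theorem~\ref{thm0} obtained by taking $\alpha=1$ and choosing the Lagrangian to be independent of the right fractional difference argument. Concretely, I would view the classical variational problem
$\mathcal{L}(y)=\int_a^b L(t,y^\sigma(t),y^\Delta(t))\Delta t$
as the special case of the $h$-fractional problem \eqref{naosei7} with integrand $\tilde L(t,u,v,w):=L(t,u,v)$ (so $\tilde L_w\equiv 0$) and with $\alpha=1$, $\beta=1$. Under this choice $\gamma=1-\alpha=0$ and $\nu=1-\beta=0$.

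The key computation is checking that the fractional difference operators collapse to the usual delta derivative. By Definition~\ref{def1} and the convention ${_a}\Delta_h^{0}f(t)={_h}\Delta_b^{0}f(t)=f(t)$, one has
\begin{equation*}
{_a}\Delta_h^{1} y(t)=\bigl({_a}\Delta_h^{0}y(t+0\cdot h)\bigr)^\Delta=y^\Delta(t),
\qquad
{_h}\Delta_{\rho(b)}^{1} f(t)=-\bigl({_h}\Delta_{\rho(b)}^{0}f(t)\bigr)^\Delta=-f^\Delta(t).
\end{equation*}
Hence, with this choice of $\tilde L$, the fractional argument $v={_a}\Delta_h^\alpha y(t)$ in the Lagrangian becomes exactly $y^\Delta(t)$, so the bracket operator $[\hat y](t)$ of Theorem~\ref{thm0} specialises to $(t,\hat y^\sigma(t),\hat y^\Delta(t))$, and the partial derivatives $\tilde L_u$, $\tilde L_v$ coincide with $L_u$, $L_v$ evaluated at $(t,\hat y^\sigma(t),\hat y^\Delta(t))$.

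Next I would substitute these reductions directly into the conclusion \eqref{EL} of Theorem~\ref{thm0}. The $\tilde L_w$--term vanishes since $\tilde L$ does not depend on $w$, and the middle term becomes
\begin{equation*}
{_h}\Delta_{\rho(b)}^{1}\bigl(L_v(t,\hat y^\sigma(t),\hat y^\Delta(t))\bigr)
= -\bigl(L_v(t,\hat y^\sigma(t),\hat y^\Delta(t))\bigr)^\Delta.
\end{equation*}
Combining, \eqref{EL} reads $L_u(t,\hat y^\sigma(t),\hat y^\Delta(t)) - (L_v(t,\hat y^\sigma(t),\hat y^\Delta(t)))^\Delta = 0$ for all $t\in\mathbb{T}^{\kappa^2}$, which is precisely the claimed classical $h$-Euler--Lagrange equation.

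There is no real obstacle here; the only point worth verifying carefully is the $\gamma=0$ boundary case of Definition~\ref{def1}, to make sure the endpoint adjustments in the summation limits collapse correctly. Once that is noted, Corollary~\ref{ELCor} is an immediate specialisation and no further analysis (in particular, no reuse of the fractional summation by parts of Theorem~\ref{teor1}) is required beyond citing Theorem~\ref{thm0}.
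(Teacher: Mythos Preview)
Your proposal is correct and matches the paper's approach exactly: the paper's proof is simply ``Choose $\alpha=1$ and a $L$ that does not depend on $w$ in Theorem~\ref{thm0}.'' Your additional verification that the fractional differences collapse to the ordinary delta derivative when $\gamma=0$ is a welcome elaboration of what the paper leaves implicit.
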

\begin{proof}
Choose $\alpha=1$ and a $L$ that does not depend on $w$
in Theorem~\ref{thm0}.
\end{proof}

\begin{remark}
If we take $h=1$ in Corollary~\ref{ELCor} we have that
$$L_u(t,\hat{y}^{\sigma}(t),\Delta\hat{y}(t))-\Delta L_v(t,\hat{y}^{\sigma}(t),\Delta\hat{y}(t)) =0$$
holds for all $t\in\mathbb{T}^{\kappa^2}$.
This equation is usually called \emph{the discrete Euler-Lagrange equation},
and can be found, \textrm{e.g.}, in \cite[Chap.~8]{book:DCV}.
\end{remark}


\subsubsection{Natural boundary conditions}

If the initial condition $y(a)=A$ is not present
in problem \eqref{naosei7} (\textrm{i.e.}, $y(a)$ is free),
besides the $h$-fractional Euler-Lagrange equation \eqref{EL}
the following supplementary condition must be fulfilled:
\begin{multline}\label{rui1}
-h^\gamma L_v[\hat{y}](a)+\frac{\gamma}{\Gamma(\gamma+1)}\left(
\int_{a}^{b}(t+\gamma h-a)_h^{(\gamma-1)}L_v[\hat{y}](t)\Delta t\right.\\
\left.-\int_{\sigma(a)}^{b}(t+\gamma
h-\sigma(a))_h^{(\gamma-1)}L_v[\hat{y}](t)\Delta t\right)+ L_w[\hat{y}](a)=0.
\end{multline}
Similarly, if $y(b)=B$ is not present in \eqref{naosei7} ($y(b)$ is free), the
extra condition
\begin{multline}\label{rui2}
h L_u[\hat{y}](\rho(b))+h^\gamma L_v[\hat{y}](\rho(b))-h^\nu L_w[\hat{y}](\rho(b))\\
+\frac{\nu}{\Gamma(\nu+1)}\left(\int_{a}^{b}(b+\nu
h-\sigma(t))_h^{(\nu-1)}L_w[\hat{y}](t)\Delta t \right.\\ \left.
-\int_{a}^{\rho(b)}(\rho(b)+\nu
h-\sigma(t))_h^{(\nu-1)}L_w[\hat{y}](t)\Delta t\right)=0
\end{multline}
is added to Theorem~\ref{thm0}. We leave the proof
of the \emph{natural boundary conditions}
\eqref{rui1} and \eqref{rui2} to the reader.
We just note here that the first term in \eqref{rui2}
arises from the first term of the left hand side of \eqref{rui3}.


\subsubsection{Second order optimality condition}

We now obtain a second order necessary condition for problem
\eqref{naosei7}, \textrm{i.e.}, we prove a Legendre optimality
type condition for the fractional $h$-difference setting.
\begin{theorem}[The $h$-fractional Legendre necessary condition]
\label{thm1}
If $\hat{y}\in\mathcal{F}_\mathbb{T}$ is a local minimum for problem
\eqref{naosei7}, then the inequality
\begin{equation}
\label{eq:LC}
\begin{split}
h^2 &L_{uu}[\hat{y}](t)+2h^{\gamma+1}L_{uv}[\hat{y}](t)+2h^{\nu+1}(\nu-1)L_{uw}[\hat{y}](t)
+h^{2\gamma}(\gamma -1)^2 L_{vv}[\hat{y}](\sigma(t))\\
&+2h^{\nu+\gamma}(\gamma-1)L_{vw}[\hat{y}](\sigma(t))+2h^{\nu+\gamma}(\nu-1)L_{vw}[\hat{y}](t)+h^{2\nu}(\nu-1)^2 L_{ww}[\hat{y}](t)\\
&+h^{2\nu}L_{ww}[\hat{y}](\sigma(t))
+\int_{a}^{t}h^3L_{ww}[\hat{y}](s)\left(\frac{\nu(1-\nu)}{\Gamma(\nu+1)}(t+\nu
h - \sigma(s))_h^{(\nu-2)}\right)^2\Delta s\\
&+h^{\gamma}L_{vv}[\hat{y}](t)
+\int_{\sigma(\sigma(t))}^{b}h^3L_{vv}[\hat{y}](s)\left(\frac{\gamma(\gamma-1)}{\Gamma(\gamma+1)}(s+\gamma h
-\sigma(\sigma(t)))_h^{(\gamma-2)}\right)^2\Delta s \geq 0
\end{split}
\end{equation}
holds for all $t\in\mathbb{T}^{\kappa^2}$, where
$[\hat{y}](t)=(t,\hat{y}^{\sigma}(t),{_a}\Delta_t^\alpha
\hat{y}(t),{_t}\Delta_b^\beta\hat{y}(t))$.
\end{theorem}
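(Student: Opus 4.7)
The approach is the classical Legendre second-variation argument, transported to the discrete fractional setting. Since $\hat y$ is a local minimum of $\mathcal{L}$ and the second-order partials of $L$ are continuous, for every admissible variation $\eta$ the real-valued map $\Phi(\varepsilon)=\mathcal{L}(\hat y+\varepsilon\eta)$ of \eqref{fi} has a minimum at $\varepsilon=0$ and is twice differentiable there, so $\Phi''(0)\geq 0$. Expanding with the chain rule together with the linearity of ${_a}\Delta_h^\alpha$ and ${_h}\Delta_b^\beta$, this gives the quadratic integral inequality
\begin{equation*}
\int_{a}^{b}\Bigl[L_{uu}(\eta^\sigma)^2+2L_{uv}\eta^\sigma\,{_a}\Delta_h^\alpha\eta+2L_{uw}\eta^\sigma\,{_h}\Delta_b^\beta\eta+L_{vv}({_a}\Delta_h^\alpha\eta)^2+2L_{vw}({_a}\Delta_h^\alpha\eta)({_h}\Delta_b^\beta\eta)+L_{ww}({_h}\Delta_b^\beta\eta)^2\Bigr]\Delta s\geq 0,
\end{equation*}
with every $L_{**}$ evaluated at $[\hat y](s)$.

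To turn this integral inequality into the pointwise bound \eqref{eq:LC} I would, for every fixed $t\in\mathbb{T}^{\kappa^2}$, insert the one-point bump variation $\eta_t$ defined by $\eta_t(\sigma(t))=1$ and $\eta_t(s)=0$ for all other $s\in\mathbb{T}$. This is admissible because $\sigma(t)\in\{a+h,\dots,\rho(b)\}$, and then $\eta_t^\sigma$ is the indicator of $\{t\}$. The key computation is to evaluate ${_a}\Delta_h^\alpha\eta_t$ and ${_h}\Delta_b^\beta\eta_t$ in closed form from Definition~\ref{def1} and the two-piece representation \eqref{seila1}, by direct differentiation of the $h$-factorial power as in the proof of Lemma~\ref{lemma:tl}. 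The outcome is that ${_a}\Delta_h^\alpha\eta_t$ is supported on $\{s\geq t\}$ with values $h^{\gamma-1}$ at $s=t$, $(\gamma-1)h^{\gamma-1}$ at $s=\sigma(t)$, and $\tfrac{\gamma(\gamma-1)h}{\Gamma(\gamma+1)}(s+\gamma h-\sigma^2(t))_h^{(\gamma-2)}$ for $s\geq\sigma^2(t)$; dually, ${_h}\Delta_b^\beta\eta_t$ is supported on $\{s\leq\sigma(t)\}$ with values $(\nu-1)h^{\nu-1}$ at $s=t$, $h^{\nu-1}$ at $s=\sigma(t)$, and $\tfrac{\nu(\nu-1)h}{\Gamma(\nu+1)}(t+\nu h-\sigma(s))_h^{(\nu-2)}$ for $s\leq\rho(t)$.

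Plugging these closed forms into $\Phi''(0)\geq 0$, each of the six integrals collapses because the fractional differences are explicitly supported: the $L_{uu}$, $L_{uv}$ and $L_{uw}$ integrals live only at $s=t$, producing the terms $h^2 L_{uu}$, $2h^{\gamma+1}L_{uv}$ and $2h^{\nu+1}(\nu-1)L_{uw}$; the $L_{vv}$ integral concentrates at $s=t$ and $s=\sigma(t)$ together with the tail $\int_{\sigma^2(t)}^{b}\cdots\Delta s$; the $L_{ww}$ integral concentrates symmetrically at $s=t$ and $s=\sigma(t)$ together with the tail $\int_{a}^{t}\cdots\Delta s$; and the cross term $L_{vw}$ concentrates precisely on the two-point overlap $\{t,\sigma(t)\}$ of the two supports, yielding $2h^{\nu+\gamma}(\gamma-1)L_{vw}(\sigma(t))+2h^{\nu+\gamma}(\nu-1)L_{vw}(t)$. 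Absorbing the single factor of $h$ produced by the delta-integral against the indicator-type terms, and using that $t\in\mathbb{T}^{\kappa^2}$ was arbitrary, one recovers \eqref{eq:LC} term by term.

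The main obstacle is the bookkeeping for ${_a}\Delta_h^\alpha\eta_t$ and ${_h}\Delta_b^\beta\eta_t$: the representation \eqref{seila1} splits into two pieces at the transition point $s=t$ (respectively $s=\sigma(t)$), so obtaining a single closed-form expression that remains valid across the transition relies on the convention that $h$-factorials at poles vanish, which in turn yields the matching of the boundary values $h^{\gamma-1}$ and $(\gamma-1)h^{\gamma-1}$ with the tail formula. A secondary subtlety is verifying that the two fractional-difference supports overlap on exactly two consecutive lattice points, which is what forces the $L_{vw}$-contribution in \eqref{eq:LC} to reduce to two summands rather than a full tail integral.
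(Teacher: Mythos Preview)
Your proposal is correct and follows essentially the same route as the paper's proof: both start from $\Phi''(0)\geq 0$, insert the single-point bump variation supported at $\sigma(\tau)$, compute the left and right fractional differences of the bump via \eqref{seila1} and the identity \eqref{rui10}, and then collapse each of the six integrals onto the two-point overlap $\{t,\sigma(t)\}$ plus the two one-sided tails. The only cosmetic difference is the height of the bump: the paper takes $\eta(\sigma(\tau))=h$ (so $\eta^\Delta(\tau)=1$, $\eta^\Delta(\sigma(\tau))=-1$), while you take $\eta_t(\sigma(t))=1$; your choice produces exactly $\tfrac{1}{h}$ times \eqref{eq:LC}, which is of course equivalent since $h>0$, so your phrase ``recovers \eqref{eq:LC} term by term'' should be read modulo this overall factor.
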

\begin{proof}
By the hypothesis of the theorem, and letting $\Phi$ be as in
\eqref{fi}, we have as necessary optimality condition that $\Phi''(0)\geq 0$
for an arbitrary admissible variation $\eta(\cdot)$.
Inequality $\Phi''(0)\geq 0$ is equivalent to
\begin{multline}
\label{des1}
\int_{a}^{b}\left[L_{uu}[\hat{y}](t)(\eta^\sigma(t))^2
+2L_{uv}[\hat{y}](t)\eta^\sigma(t){_a}\Delta_h^\alpha\eta(t)
+2L_{uw}[\hat{y}](t)\eta^\sigma(t){_h}\Delta_b^\beta\eta(t)\right.\\
\left. +L_{vv}[\hat{y}](t)({_a}\Delta_h^\alpha\eta(t))^2
+2L_{vw}[\hat{y}](t){_a}\Delta_h^\alpha\eta(t){_h}\Delta_b^\beta\eta(t)
+L_{ww}(t)({_h}\Delta_b^\beta\eta(t))^2\right]\Delta t\geq 0.
\end{multline}
Let $\tau\in\mathbb{T}^{\kappa^2}$ be arbitrary, and choose
$\eta:\mathbb{T}\rightarrow\mathbb{R}$ given by
$\eta(t) = \left\{ \begin{array}{ll}
h & \mbox{if $t=\sigma(\tau)$};\\
0 & \mbox{otherwise}.\end{array} \right.$ It follows that
$\eta(a)=\eta(b)=0$, \textrm{i.e.}, $\eta$ is an admissible variation.
Using \eqref{naosei1} we get
\begin{equation*}
\begin{split}
\int_{a}^{b}&\left[L_{uu}[\hat{y}](t)(\eta^\sigma(t))^2
+2L_{uv}[\hat{y}](t)\eta^\sigma(t){_a}\Delta_h^\alpha\eta(t)
+L_{vv}[\hat{y}](t)({_a}\Delta_h^\alpha\eta(t))^2\right]\Delta t\\
&=\int_{a}^{b}\Biggl[L_{uu}[\hat{y}](t)(\eta^\sigma(t))^2\\
&\qquad\quad +2L_{uv}[\hat{y}](t)\eta^\sigma(t)\left(h^\gamma \eta^{\Delta}(t)+
\frac{\gamma}{\Gamma(\gamma+1)}\int_{a}^{t}(t+\gamma h
-\sigma(s))_h^{(\gamma-1)}\eta^{\Delta}(s)\Delta s\right)\\
&\qquad\quad +L_{vv}[\hat{y}](t)\left(h^\gamma \eta^{\Delta}(t)
+\frac{\gamma}{\Gamma(\gamma+1)}\int_{a}^{t}(t+\gamma h
-\sigma(s))_h^{(\gamma-1)}\eta^{\Delta}(s)\Delta s\right)^2\Biggr]\Delta t\\
&=h^3L_{uu}[\hat{y}](\tau)+2h^{\gamma+2}L_{uv}[\hat{y}](\tau)+h^{\gamma+1}L_{vv}[\hat{y}](\tau)\\
&\quad +\int_{\sigma(\tau)}^{b}L_{vv}[\hat{y}](t)\left(h^\gamma\eta^{\Delta}(t)
+\frac{\gamma}{\Gamma(\gamma+1)}\int_{a}^{t}(t+\gamma
h-\sigma(s))_h^{(\gamma-1)}\eta^{\Delta}(s)\Delta s\right)^2\Delta t.
\end{split}
\end{equation*}
Observe that
\begin{multline*}
h^{2\gamma+1}(\gamma -1)^2 L_{vv}[\hat{y}](\sigma(\tau))\\
+\int_{\sigma^2(\tau)}^{b}L_{vv}[\hat{y}](t)\left(\frac{\gamma}{\Gamma(\gamma+1)}\int_{a}^{t}(t+\gamma
h-\sigma(s))_h^{(\gamma-1)}\eta^{\Delta}(s)\Delta s\right)^2\Delta t\\
=\int_{\sigma(\tau)}^{b}L_{vv}[\hat{y}](t)\left(h^\gamma
\eta^\Delta(t)+\frac{\gamma}{\Gamma(\gamma+1)}\int_{a}^{t}(t+\gamma
h-\sigma(s))_h^{(\gamma-1)}\eta^{\Delta}(s)\Delta s\right)^2\Delta t.
\end{multline*}
Let $t\in[\sigma^2(\tau),\rho(b)]\cap h\mathbb{Z}$. Since
\begin{equation}
\label{rui10}
\begin{split}
\frac{\gamma}{\Gamma(\gamma+1)}&\int_{a}^{t}(t+\gamma h -\sigma(s))_h^{(\gamma-1)}\eta^{\Delta}(s)\Delta s\\
&= \frac{\gamma}{\Gamma(\gamma+1)}\left[\int_{a}^{\sigma(\tau)}(t+\gamma h-\sigma(s))_h^{(\gamma-1)}\eta^{\Delta}(s)\Delta s\right.\\
&\qquad\qquad\qquad\qquad \left.+\int_{\sigma(\tau)}^{t}(t+\gamma h-\sigma(s))_h^{(\gamma-1)}\eta^{\Delta}(s)\Delta s\right]\\
&=h\frac{\gamma}{\Gamma(\gamma+1)}\left[(t+\gamma h-\sigma(\tau))_h^{(\gamma-1)}-(t+\gamma h-\sigma(\sigma(\tau)))_h^{(\gamma-1)}\right]\\
&=\frac{\gamma h^\gamma}{\Gamma(\gamma+1)}\left[
\frac{\left(\frac{t-\tau}{h}+\gamma-1\right)\Gamma\left(\frac{t-\tau}{h}+\gamma-1\right)
-\left(\frac{t-\tau}{h}\right)\Gamma\left(\frac{t-\tau}{h}+\gamma-1\right)}
{\left(\frac{t-\tau}{h}\right)\Gamma\left(\frac{t-\tau}{h}\right)}\right]\\
&=h^{2}\frac{\gamma(\gamma-1)}{\Gamma(\gamma+1)}(t+\gamma h
-\sigma(\sigma(\tau)))_h^{(\gamma-2)},
\end{split}
\end{equation}
we conclude that
\begin{multline*}
\int_{\sigma^2(\tau)}^{b}L_{vv}[\hat{y}](t)\left(\frac{\gamma}{\Gamma(\gamma+1)}\int_{a}^{t}(t
+\gamma h-\sigma(s))_h^{(\gamma-1)}\eta^{\Delta}(s)\Delta s\right)^2\Delta t\\
=\int_{\sigma^2(\tau)}^{b}L_{vv}[\hat{y}](t)\left(h^2\frac{\gamma(\gamma-1)}{\Gamma(\gamma+1)}(t
+\gamma h-\sigma^2(\tau))_h^{(\gamma-2)}\right)^2\Delta t.
\end{multline*}
Note that we can write
${_t}\Delta_b^\beta\eta(t)=-{_h}\Delta_{\rho(b)}^{-\nu}
\eta^\Delta(t-\nu h)$ because $\eta(b)=0$.
It is not difficult to see that the following equality holds:
\begin{equation*}
\begin{split}
\int_{a}^{b}2L_{uw}[\hat{y}](t)\eta^\sigma(t){_h}\Delta_b^\beta\eta(t)\Delta t
&=-\int_{a}^{b}2L_{uw}[\hat{y}](t)\eta^\sigma(t){_h}\Delta_{\rho(b)}^{-\nu}
\eta^\Delta(t-\nu h)\Delta t\\
&=2h^{2+\nu}L_{uw}[\hat{y}](\tau)(\nu-1) \, .
\end{split}
\end{equation*}
Moreover,
\begin{equation*}
\begin{split}
\int_{a}^{b} &2L_{vw}[\hat{y}](t){_a}\Delta_h^\alpha\eta(t){_h}\Delta_b^\beta\eta(t)\Delta t\\
&=-2\int_{a}^{b}L_{vw}[\hat{y}](t)\left\{\left(h^\gamma\eta^{\Delta}(t)+\frac{\gamma}{\Gamma(\gamma+1)}
\cdot\int_{a}^{t}(t+\gamma h-\sigma(s))_h^{(\gamma-1)}\eta^{\Delta}(s)\Delta s\right)\right.\\
&\qquad\qquad \left.\cdot\left[h^\nu\eta^{\Delta}(t)+\frac{\nu}{\Gamma(\nu+1)}\int_{\sigma(t)}^{b}(s
+\nu h-\sigma(t))_h^{(\nu-1)}\eta^{\Delta}(s)\Delta s\right]\right\}\Delta t\\
&=2h^{\gamma+\nu+1}(\nu-1)L_{vw}[\hat{y}](\tau)+2h^{\gamma+\nu+1}(\gamma-1)L_{vw}[\hat{y}](\sigma(\tau)).
\end{split}
\end{equation*}
Finally, we have that
\begin{equation*}
\begin{split}
&\int_{a}^{b} L_{ww}[\hat{y}](t)({_h}\Delta_b^\beta\eta(t))^2\Delta t\\
&=\int_{a}^{\sigma(\sigma(\tau))}L_{ww}[\hat{y}](t)\left[h^\nu\eta^{\Delta}(t)+\frac{\nu}{\Gamma(\nu+1)}
\int_{\sigma(t)}^{b}(s+\nu h-\sigma(t))_h^{(\nu-1)}\eta^{\Delta}(s)\Delta s\right]^2\Delta t\\
&=\int_{a}^{\tau}L_{ww}[\hat{y}](t)\left[\frac{\nu}{\Gamma(\nu+1)}\int_{\sigma(t)}^{b}(s
+\nu h-\sigma(t))_h^{(\nu-1)}\eta^{\Delta}(s)\Delta s\right]^2\Delta t\\
&\qquad +hL_{ww}[\hat{y}](\tau)(h^\nu-\nu h^\nu)^2+h^{2\nu+1}L_{ww}[\hat{y}](\sigma(\tau))\\
&=\int_{a}^{\tau}L_{ww}[\hat{y}](t)\left[h\frac{\nu}{\Gamma(\nu+1)}\left\{(\tau+\nu h
-\sigma(t))_h^{(\nu-1)}-(\sigma(\tau)+\nu h-\sigma(t))_h^{(\nu-1)}\right\}\right]^2\\
&\qquad + hL_{ww}[\hat{y}](\tau)(h^\nu-\nu h^\nu)^2+h^{2\nu+1}L_{ww}[\hat{y}](\sigma(\tau)).
\end{split}
\end{equation*}
Similarly as we did in \eqref{rui10}, we can prove that
\begin{multline*}
h\frac{\nu}{\Gamma(\nu+1)}\left\{(\tau+\nu
h-\sigma(t))_h^{(\nu-1)}-(\sigma(\tau)+\nu
h-\sigma(t))_h^{(\nu-1)}\right\}\\
=h^{2}\frac{\nu(1-\nu)}{\Gamma(\nu+1)}(\tau+\nu h-\sigma(t))_h^{(\nu-2)}.
\end{multline*}
Thus, we have that inequality \eqref{des1} is equivalent to
\begin{multline}
\label{des2}
h\Biggl\{h^2L_{uu}[\hat{y}](t)+2h^{\gamma+1}L_{uv}[\hat{y}](t)
+h^{\gamma}L_{vv}[\hat{y}](t)+L_{vv}(\sigma(t))(\gamma h^\gamma-h^\gamma)^2\\
+\int_{\sigma(\sigma(t))}^{b}h^3L_{vv}(s)\left(\frac{\gamma(\gamma-1)}{\Gamma(\gamma+1)}(s
+\gamma h -\sigma(\sigma(t)))_h^{(\gamma-2)}\right)^2\Delta s\\
+2h^{\nu+1}L_{uw}[\hat{y}](t)(\nu-1)+2h^{\gamma+\nu}(\nu-1)L_{vw}[\hat{y}](t)\\
+2h^{\gamma+\nu}(\gamma-1)L_{vw}(\sigma(t))+h^{2\nu}L_{ww}[\hat{y}](t)(1-\nu)^2+h^{2\nu}L_{ww}[\hat{y}](\sigma(t))\\
+\int_{a}^{t}h^3L_{ww}[\hat{y}](s)\left(\frac{\nu(1-\nu)}{\Gamma(\nu+1)}(t+\nu h
- \sigma(s))^{\nu-2}\right)^2\Delta s\Biggr\}\geq 0.
\end{multline}
Because $h>0$, \eqref{des2} is equivalent to \eqref{eq:LC}.
The theorem is proved.
\end{proof}

The next result is a simple corollary of Theorem~\ref{thm1}.
\begin{corollary}[The $h$-Legendre necessary condition -- \textrm{cf.} Result~1.3 of \cite{CD:Bohner:2004}]
\label{CorDis:Bohner}
Let $\mathbb{T}$ be the time scale $h \mathbb{Z}$, $h > 0$, with
the forward jump operator $\sigma$ and the delta derivative $\Delta$.
Assume $a, b \in \mathbb{T}$, $a < b$. If $\hat{y}$ is a solution
to the problem
\begin{equation*}
\mathcal{L}(y(\cdot))=\int_{a}^{b}L(t,y^{\sigma}(t),y^\Delta(t))\Delta t \longrightarrow \min,
\  y(a)=A, \ y(b)=B \, ,
\end{equation*}
then the inequality
\begin{equation}
\label{LNCBohner}
h^2L_{uu}[\hat{y}](t)+2hL_{uv}[\hat{y}](t)+L_{vv}[\hat{y}](t)+L_{vv}[\hat{y}](\sigma(t)) \geq 0
\end{equation}
holds for all $t\in\mathbb{T}^{\kappa^2}$, where
$[\hat{y}](t)=(t,\hat{y}^{\sigma}(t),\hat{y}^\Delta(t))$.
\end{corollary}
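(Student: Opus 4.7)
The plan is to specialize Theorem~\ref{thm1} by taking $\alpha = 1$ (so $\gamma := 1-\alpha = 0$) and by assuming that the Lagrangian $L$ is independent of its fourth argument $w$. Under these choices, Definition~\ref{def1} gives ${_a}\Delta_h^\alpha y(t) = ({_a}\Delta_h^{0}y(t+0))^\Delta = y^\Delta(t)$, so the $h$-fractional problem \eqref{naosei7} reduces to the classical $h$-variational problem stated in the corollary, and the bound \eqref{eq:LC} must collapse to \eqref{LNCBohner}. The task is therefore purely algebraic: walk through the ten summands on the left-hand side of \eqref{eq:LC} and identify which survive and which vanish.

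First, the assumption that $L$ does not depend on $w$ makes $L_{uw}$, $L_{vw}$, and $L_{ww}$ identically zero. This kills four of the explicit summands in \eqref{eq:LC} (the ones with coefficients $2h^{\nu+1}(\nu-1)$, $2h^{\nu+\gamma}(\gamma-1)$, $2h^{\nu+\gamma}(\nu-1)$, $h^{2\nu}(\nu-1)^2$, $h^{2\nu}$) together with the integral $\int_{a}^{t} h^3 L_{ww}[\hat{y}](s)(\cdots)^2 \Delta s$. Note that no issue arises from the factor $h^{\nu}$ for undetermined $\nu$, since the whole corresponding factor is multiplied by a vanishing derivative.

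Next, substituting $\gamma=0$ yields $h^{\gamma+1}=h$, $h^{2\gamma}(\gamma-1)^2 = 1$, and $h^{\gamma} = 1$, so the three remaining terms contribute $2h\,L_{uv}[\hat{y}](t)$, $L_{vv}[\hat{y}](\sigma(t))$, and $L_{vv}[\hat{y}](t)$ respectively. Moreover, the surviving integral
$$\int_{\sigma^2(t)}^{b} h^3 L_{vv}[\hat{y}](s)\left(\frac{\gamma(\gamma-1)}{\Gamma(\gamma+1)}(s+\gamma h - \sigma^2(t))_h^{(\gamma-2)}\right)^2 \Delta s$$
carries the factor $\gamma(\gamma-1)=0$ and therefore vanishes identically, regardless of the value of the $h$-factorial function.

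Collecting what remains, \eqref{eq:LC} becomes
$$h^2 L_{uu}[\hat{y}](t) + 2h\, L_{uv}[\hat{y}](t) + L_{vv}[\hat{y}](\sigma(t)) + L_{vv}[\hat{y}](t) \geq 0,$$
which is exactly \eqref{LNCBohner}. The only obstacle is the bookkeeping: one must make sure every one of the ten terms of \eqref{eq:LC} is accounted for, and that neither the $h^\nu$ factors nor the $(\gamma-2)$ $h$-factorial functions cause any hidden trouble. Since each problematic factor is multiplied by a coefficient that is forced to vanish by one of the two simplifications above, no further analytical work is required.
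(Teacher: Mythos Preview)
Your proposal is correct and follows exactly the approach of the paper: set $\alpha=1$ (so $\gamma=0$) and take $L$ independent of $w$, then read off the surviving terms of \eqref{eq:LC}. The paper's proof is a single sentence to this effect; you have simply spelled out the term-by-term verification (modulo a harmless miscount of how many summands are killed by the $w$-independence --- there are in fact five such explicit terms plus the $L_{ww}$ integral, not four).
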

\begin{proof}
Choose $\alpha=1$ and a Lagrangian $L$ that does not depend on $w$.
Then, $\gamma=0$ and the result follows immediately from Theorem~\ref{thm1}.
\end{proof}

\begin{remark}
When $h$ goes to zero we have $\sigma(t) = t$
and inequality \eqref{LNCBohner} coincides with
Legendre's classical necessary optimality condition $L_{vv}[\hat{y}](t) \ge 0$
(\textrm{cf.}, \textrm{e.g.}, \cite{vanBrunt}).
\end{remark}


\section{Examples}
\label{sec2}

In this section we present some illustrative examples.

\begin{example}
\label{ex:2} Let us consider the following problem:
\begin{equation}
\label{eq:ex2} \mathcal{L}(y)=\frac{1}{2}
\int_{0}^{1} \left({_0}\Delta_h^{\frac{3}{4}} y(t)\right)^2\Delta t
\longrightarrow \min \, , \quad y(0)=0 \, , \quad y(1)=1 \, .
\end{equation}
We consider (\ref{eq:ex2}) with different
values of $h$. Numerical results show that when $h$ tends to
zero the $h$-fractional Euler-Lagrange extremal tends to the fractional continuous extremal:
when $h \rightarrow 0$ (\ref{eq:ex2}) tends to the
fractional continuous variational problem
in the Riemann-Liouville sense studied in \cite[Example~1]{agr0},
with solution given by
\begin{equation}
\label{solEx2}
y(t)=\frac{1}{2}\int_0^t\frac{dx}{\left[(1-x)(t-x)\right]^{\frac{1}{4}}} \, .
\end{equation}
This is illustrated in Figure~\ref{Fig:2}.
\begin{figure}[ht]
\begin{center}
\includegraphics[scale=0.45]{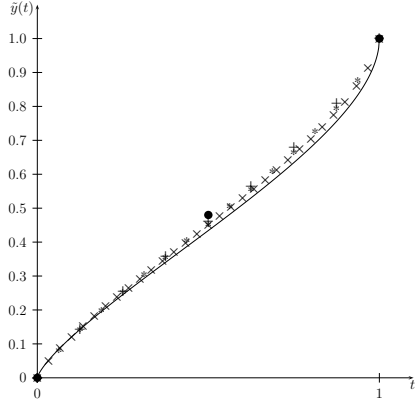}
  \caption{Extremal $\tilde{y}(t)$ for problem of Example~\ref{ex:2}
  with different values of $h$:
  $h=0.50$ ($\bullet$); $h=0.125$ ($+$);
  $h=0.0625$ ($\ast$); $h=1/30$ ($\times$).
  The continuous line represent function
  (\ref{solEx2}).}\label{Fig:2}
\end{center}
\end{figure}
In this example for each value of $h$ there is a unique
$h$-fractional Euler-Lagrange extremal,
solution of \eqref{EL}, which always verifies the
$h$-fractional Legendre necessary condition \eqref{eq:LC}.
\end{example}

\begin{example}
\label{ex:1} Let us consider the following problem:
\begin{equation}
\label{eq:ex1}
\mathcal{L}(y)=\int_{0}^{1}
\left[\frac{1}{2}\left({_0}\Delta_h^\alpha
y(t)\right)^2-y^{\sigma}(t)\right]\Delta t \longrightarrow \min \, ,
\quad y(0) = 0 \, , \quad y(1) = 0 \, .
\end{equation}
We begin by considering problem (\ref{eq:ex1}) with a fixed value for $\alpha$
and different values of $h$. The extremals $\tilde{y}$ are obtained
using our Euler-Lagrange equation (\ref{EL}).
As in Example~\ref{ex:2} the numerical results show that when $h$ tends to zero the extremal
of the problem tends to the extremal of the corresponding continuous
fractional problem of the calculus of variations in the Riemann-Liouville sense.
More precisely, when $h$ approximates zero problem (\ref{eq:ex1}) tends to the fractional
continuous problem studied in \cite[Example~2]{agr2}. For $\alpha=1$
and $h \rightarrow 0$ the extremal of (\ref{eq:ex1}) is given by
$y(t)=\frac{1}{2} t (1-t)$,
which coincides with the extremal of the classical
problem of the calculus of variations
\begin{equation*}
\mathcal{L}(y)=\int_{0}^{1} \left(\frac{1}{2} y'(t)^2-y(t)\right) dt \longrightarrow \min \, ,
\quad y(0) = 0 \, , \quad y(1) = 0 \, .
\end{equation*}
This is illustrated in Figure~\ref{Fig:0}
for $h = \frac{1}{2^i}$, $i = 1, 2, 3, 4$.
\begin{figure}[ht]
\begin{minipage}[b]{0.45\linewidth}
\begin{center}
\includegraphics[scale=0.45]{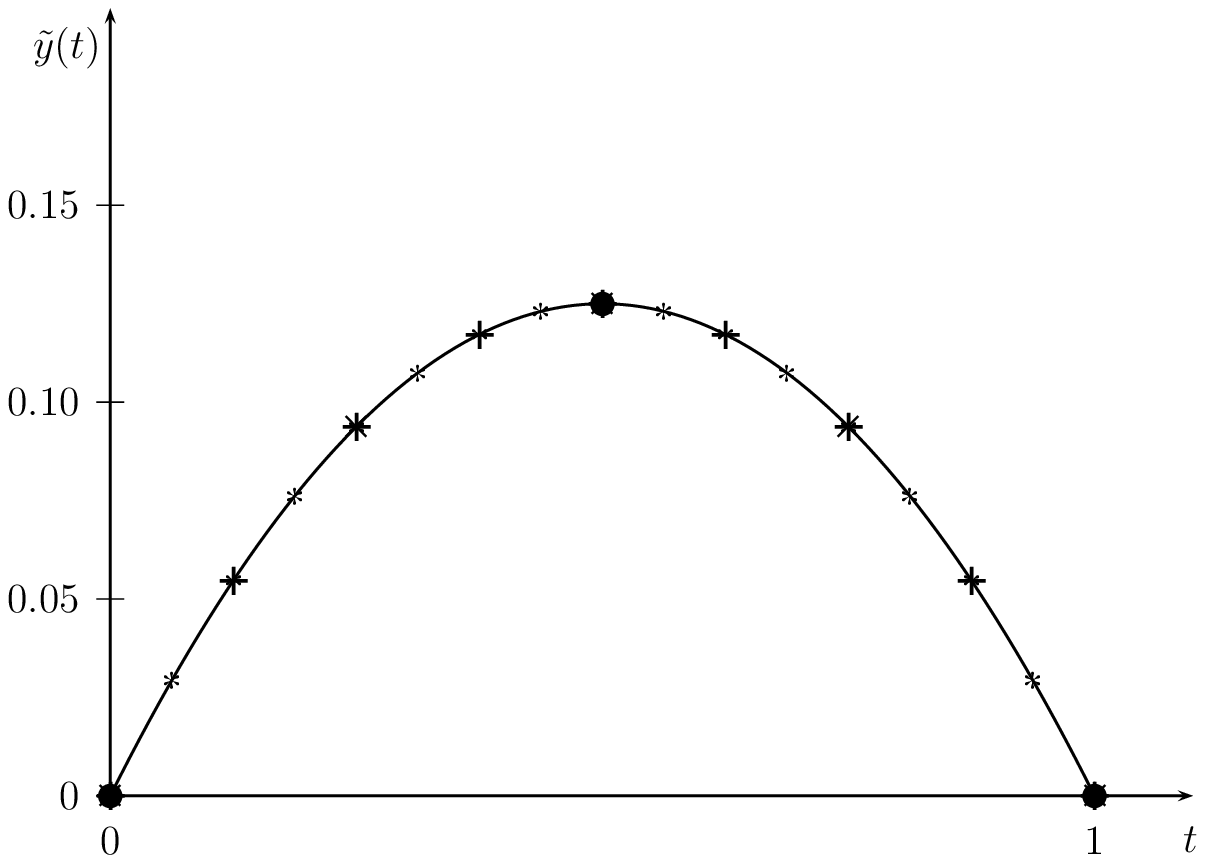}
  \caption{Extremal $\tilde{y}(t)$ for problem \eqref{eq:ex1}
  with $\alpha=1$ and different values of $h$:
  $h=0.5$ ($\bullet$); $h=0.25$ ($\times$);
  $h=0.125$ ($+$); $h=0.0625$ ($\ast$).}\label{Fig:0}
\end{center}
\end{minipage}
\hspace{0.05cm}
\begin{minipage}[b]{0.45\linewidth}
\begin{center}
\includegraphics[scale=0.45]{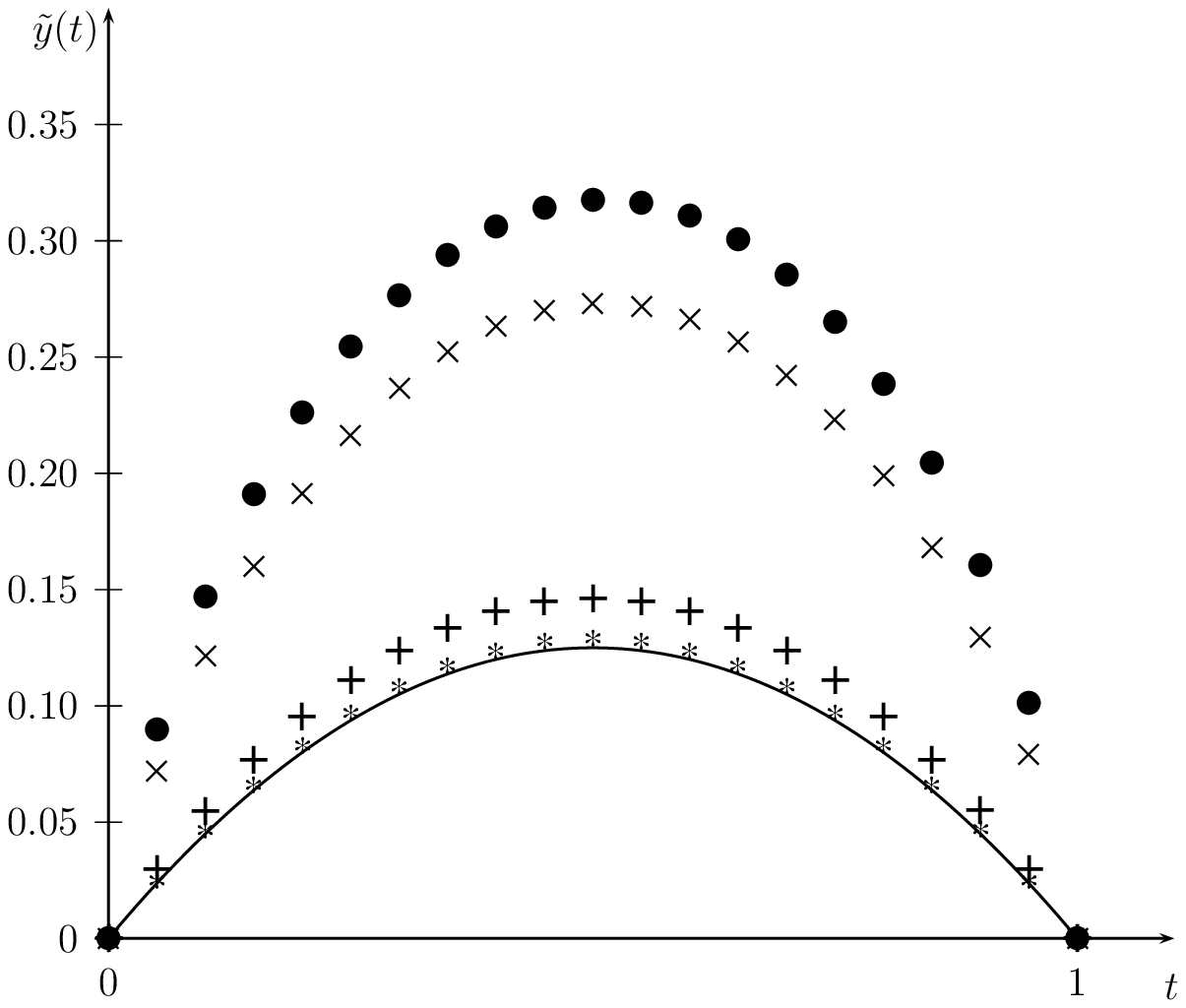}
  \caption{Extremal $\tilde{y}(t)$ for \eqref{eq:ex1}
with $h=0.05$ and different values of $\alpha$:
$\alpha=0.70$ ($\bullet$); $\alpha=0.75$ ($\times$);
$\alpha=0.95$ ($+$); $\alpha=0.99$ ($\ast$).
The continuous line is $y(t)=\frac{1}{2} t (1-t)$.}\label{Fig:1}
\end{center}
\end{minipage}
\end{figure}
In this example, for each value of $\alpha$ and $h$,
we only have one extremal (we only have one solution
to (\ref{EL}) for each $\alpha$ and $h$).
Our Legendre condition \eqref{eq:LC}
is always verified along the extremals.
Figure~\ref{Fig:1} shows the extremals of problem \eqref{eq:ex1}
for a fixed value of $h$ ($h=1/20$) and
different values of $\alpha$. The numerical results show
that when $\alpha$ tends to one the extremal tends to the solution
of the classical (integer order) discrete-time problem.
\end{example}

Our last example shows that the $h$-fractional Legendre necessary optimality condition
can be a very useful tool. In Example~\ref{ex:3} we consider a problem
for which the $h$-fractional Euler-Lagrange equation
gives several candidates but just a few of them verify
the Legendre condition \eqref{eq:LC}.

\begin{example}
\label{ex:3} Let us consider the following problem:
\begin{equation}
\label{eq:ex3} \mathcal{L}(y)=\int_{a}^{b} \left({_a}\Delta_h^\alpha
y(t)\right)^3+\theta\left({_h}\Delta_b^\alpha y(t)\right)^2\Delta t
\longrightarrow \min \, , \quad y(a)=0 \, , \quad y(b)=1 \, .
\end{equation}
For $\alpha=0.8$, $\beta=0.5$, $h=0.25$, $a=0$, $b=1$, and $\theta=1$,
problem (\ref{eq:ex3}) has eight different Euler-Lagrange extremals.
As we can see on Table~\ref{candidates:ex3}
only two of the candidates verify the Legendre condition.
To determine the best candidate we compare the values
of the functional $\mathcal{L}$ along the two good candidates.
The extremal we are looking for is given by the candidate number five on Table~\ref{candidates:ex3}.
\begin{table}
\footnotesize
\centering
\begin{tabular}{|c|c|c|c|c|c|}\hline
\# & $\tilde{y}\left(\frac{1}{4}\right)$ & $\tilde{y}\left(\frac{1}{2}\right)$
& $\tilde{y}\left(\frac{3}{4}\right)$ & $\mathcal{L}(\tilde{y})$ & Legendre condition \eqref{eq:LC}\\
\hline
         1 & -0.5511786 &  0.0515282 &  0.5133134 &  9.3035911 &         Not verified \\
\hline
         2 &  0.2669091 &  0.4878808 &  0.7151924 &  2.0084203 &        Verified \\
\hline
         3 & -2.6745703 &  0.5599360 & -2.6730125 & 698.4443232 &         Not verified \\
\hline
         4 &  0.5789976 &  1.0701515 &  0.1840377 & 12.5174960 &         Not verified \\
\hline
         5 &  1.0306820 &  1.8920322 &  2.7429222 & -32.7189756 &        Verified \\
\hline
         6 &  0.5087946 & -0.1861431 &  0.4489196 & 10.6730959 &         Not verified \\
\hline
         7 &  4.0583690 & -1.0299054 & -5.0030989 & 2451.7637948 &         Not verified \\
\hline
         8 & -1.7436106 & -3.1898449 & -0.8850511 & 238.6120299 &         Not verified \\
\hline
  \end{tabular}
\smallskip
  \caption{There exist 8 Euler-Lagrange extremals for problem \eqref{eq:ex3}
  with $\alpha=0.8$, $\beta=0.5$, $h=0.25$, $a=0$, $b=1$, and $\theta=1$,
  but only 2 of them satisfy the fractional Legendre condition \eqref{eq:LC}.}
  \label{candidates:ex3}
\end{table}
\begin{table}
\footnotesize
\centering
\begin{tabular}{|c|c|c|c|c|c|c|} \hline
\# & $\tilde{y}(0.1)$ & $\tilde{y}(0.2)$ & $\tilde{y}(0.3)$ & $\tilde{y}(0.4)$ &  $\mathcal{L}(\tilde{y})$ & \eqref{eq:LC}\\ \hline

         1 & -0.305570704 & -0.428093486 & 0.223708338 & 0.480549114 & 12.25396166 &         No \\\hline

         2 & -0.427934654 & -0.599520948 & 0.313290997 & -0.661831134 & 156.2317667 &         No \\\hline

         3 & 0.284152257 & -0.227595659 & 0.318847274 & 0.531827387 & 8.669645848 &         No \\\hline

         4 & -0.277642565 & 0.222381632 & 0.386666793 & 0.555841555 & 6.993518478 &         No \\\hline

         5 & 0.387074742 & -0.310032839 & 0.434336603 & -0.482903047 & 110.7912605 &         No \\\hline

         6 & 0.259846344 & 0.364035314 & 0.463222456 & 0.597907505 & 5.104389191 &        Yes \\\hline

         7 & -0.375094681 & 0.300437245 & 0.522386246 & -0.419053781 & 93.95316858 &         No \\\hline

         8 & 0.343327771 & 0.480989769 & 0.61204299 & -0.280908953 & 69.23497954 &         No \\\hline

         9 & 0.297792192 & 0.417196073 & -0.218013689 & 0.460556635 & 14.12227593 &         No \\\hline

        10 & 0.41283304 & 0.578364133 & -0.302235104 & -0.649232892 & 157.8272685 &         No \\\hline

        11 & -0.321401682 & 0.257431098 & -0.360644857 & 0.400971272 & 19.87468886 &         No \\\hline

        12 & 0.330157414 & -0.264444122 & -0.459803086 & 0.368850105 & 24.84475504 &         No \\\hline

        13 & -0.459640837 & 0.368155651 & -0.515763025 & -0.860276767 & 224.9964788 &         No \\\hline

        14 & -0.359429958 & -0.50354835 & -0.640748011 & 0.294083676 & 34.43515839 &         No \\\hline

        15 & 0.477760586 & -0.382668914 & -0.66536683 & -0.956478654 & 263.3075289 &         No \\\hline

        16 & -0.541587541 & -0.758744525 & -0.965476394 & -1.246195157 & 392.9592508 &         No \\\hline
\end{tabular}
\smallskip
\caption{There exist 16 Euler-Lagrange extremals for problem \eqref{eq:ex3}
  with $\alpha=0.3$, $h=0.1$, $a=0$, $b=0.5$, and $\theta=0$,
  but only 1 (candidate \#6) satisfy the fractional Legendre condition \eqref{eq:LC}.}\label{16dados}
\end{table}

For problem (\ref{eq:ex3}) with $\alpha=0.3$, $h=0.1$, $a=0$, $b=0.5$, and $\theta=0$,
we obtain the results of Table~\ref{16dados}:
there exist sixteen Euler-Lagrange extremals but only one
satisfy the fractional Legendre condition.
The extremal we are looking for is given by the candidate number six on Table~\ref{16dados}.
\end{example}

The numerical results show that the solutions to our
discrete-time fractional variational problems
converge to the classical discrete-time solutions
when the fractional order of the discrete-derivatives tend to integer values,
and to the fractional Riemann-Liouville continuous-time solutions
when $h$ tends to zero.


\section{Conclusion}
\label{sec:conc}

The discrete fractional calculus is a recent subject
under strong current development due to its importance
as a modeling tool of real phenomena.
In this work we introduce a new fractional difference variational calculus
in the time-scale $(h\mathbb{Z})_a$, $h > 0$ and $a$ a real number,
for Lagrangians depending on left and right discrete-time fractional derivatives.
Our objective was to introduce the concept of left and right fractional sum/difference
(\textrm{cf.} Definition~\ref{def0}) and to develop the theory of fractional difference calculus.
An Euler--Lagrange type equation \eqref{EL},
fractional natural boundary conditions \eqref{rui1} and \eqref{rui2},
and a second order Legendre type necessary optimality condition \eqref{eq:LC},
were obtained. The results are based on a new discrete fractional summation by parts formula
\eqref{delf:sumPart} for $(h\mathbb{Z})_a$. Obtained first and second order necessary optimality conditions
were implemented computationally in the computer algebra systems
\textsf{Maple} and \textsf{Maxima}. Our numerical results show that:
\begin{enumerate}

\item the solutions of our fractional problems converge to
the classical discrete-time solutions in $(h\mathbb{Z})_a$
when the fractional order of the discrete-derivatives tend to integer values;

\item the solutions of the considered fractional problems converge
to the fractional Riemann--Liouville continuous solutions
when $h \rightarrow 0$;

\item there are cases for which the fractional Euler--Lagrange equation
give only one candidate that does not verify the obtained
Legendre condition (so the problem at hands does not have a minimum);

\item there are cases for which the Euler--Lagrange equation give only one
candidate that verify the Legendre condition (so the extremal is
a candidate for minimizer, not for maximizer);

\item there are cases for which the Euler--Lagrange equation give us several
candidates and just a few of them verify the Legendre condition.

\end{enumerate}
We can say that the obtained Legendre condition
can be a very practical tool to conclude when a candidate
identified via the Euler--Lagrange equation
is really a solution of the fractional variational problem.
It is worth to mention that a fractional Legendre condition
for the continuous fractional variational calculus
is still an open question.

Undoubtedly, much remains to be done in the development of the theory
of discrete fractional calculus of variations in $(h\mathbb{Z})_a$
here initiated. Moreover, we trust that the present work
will initiate research not only in the area
of the discrete-time fractional calculus of variations
but also in solving fractional difference equations
containing left and right fractional differences.
One of the subjects that deserves special attention
is the question of existence of solutions to the discrete
fractional Euler--Lagrange equations. Note that the obtained
fractional equation \eqref{EL} involves both the left and the
right discrete fractional derivatives. Other interesting directions
of research consist to study optimality conditions for more general variable
endpoint variational problems \cite{Zeidan,AD:10b,MyID:169};
isoperimetric problems \cite{Almeida1,AlmeidaNabla};
higher-order problems of the calculus of variations
\cite{B:J:05,RD,NataliaHigherOrderNabla};
to obtain fractional sufficient optimality
conditions of Jacobi type and a version of Noether's theorem
\cite{Bartos,Cresson:Frederico:Torres,gastao:delfim,MyID:149}
for discrete-time fractional variational problems;
direct methods of optimization for absolute extrema
\cite{Bohner:F:T,mal:tor,T:L:08};
to generalize our fractional first and second order
optimality conditions for a fractional Lagrangian
possessing delay terms \cite{B:M:J:08,M:J:B:09};
and to generalize the results from $(h\mathbb{Z})_a$
to an arbitrary time scale $\mathbb{T}$.


\section*{Acknowledgments}

This work is part of the first author's PhD project carried
out at the University of Aveiro under the framework of the Doctoral Programme
\emph{Mathematics and Applications} of Universities of Aveiro and Minho.
The financial support of the Polytechnic Institute of Viseu and
\emph{The Portuguese Foundation for Science and Technology} (FCT),
through the ``Programa de apoio \`{a} forma\c{c}\~{a}o avan\c{c}ada
de docentes do Ensino Superior Polit\'{e}cnico'',
PhD fellowship SFRH/PROTEC/49730/2009, is here gratefully acknowledged.
The second author was supported by FCT through the PhD fellowship
SFRH/BD/39816/2007; the third author by FCT through the R\&D
unit \emph{Centre for Research on Optimization and Control} (CEOC)
and the project UTAustin/MAT/0057/2008.




\begin{thebibliography}{10}

\bibitem{agr0}
O. P. Agrawal,
Formulation of Euler-Lagrange equations for fractional variational problems,
J. Math. Anal. Appl. {\bf 272} (2002), no.~1, 368--379.

\bibitem{agr2}
O. P. Agrawal, A general finite element formulation
for fractional variational problems,
J. Math. Anal. Appl. {\bf 337} (2008), no.~1, 1--12.

\bibitem{agr3}
O. P. Agrawal\ and\ D. Baleanu,
A Hamiltonian formulation and a direct numerical scheme for
fractional optimal control problems,
J. Vib. Control {\bf 13} (2007), no.~9-10, 1269--1281.

\bibitem{MyID:182}
R. Almeida, A. B. Malinowska\ and\ D. F. M. Torres,
A fractional calculus of variations for multiple integrals
with application to vibrating string,
J. Math. Phys. {\bf 51} (2010), no.~3, 033503, 12~pp.
{\tt arXiv:1001.2722}

\bibitem{Almeida1}
R. Almeida\ and\ D. F. M. Torres,
H\"olderian variational problems subject to integral constraints,
J. Math. Anal. Appl. {\bf 359} (2009), no.~2, 674--681.
{\tt arXiv:0807.3076}

\bibitem{AlmeidaNabla}
R. Almeida\ and\ D. F. M. Torres,
Isoperimetric problems on time scales with nabla derivatives,
J. Vib. Control {\bf 15} (2009), no.~6, 951--958.
{\tt arXiv:0811.3650}

\bibitem{RicDel}
R. Almeida\ and\ D. F. M. Torres,
Calculus of variations with fractional derivatives and fractional integrals,
Appl. Math. Lett. {\bf 22} (2009), no.~12, 1816--1820.
{\tt arXiv:0907.1024}

\bibitem{Atici0}
F. M. Atici\ and\ P. W. Eloe,
A transform method in discrete fractional calculus,
Int. J. Difference Equ. {\bf 2} (2007), no.~2, 165--176.

\bibitem{Atici}
F. M. Atici\ and\ P. W. Eloe,
Initial value problems in discrete fractional calculus,
Proc. Amer. Math. Soc. {\bf 137} (2009), no.~3, 981--989.

\bibitem{B:08}
D. Baleanu,
New applications of fractional variational principles,
Rep. Math. Phys. {\bf 61} (2008), no.~2, 199--206.

\bibitem{Ozlem}
D. Baleanu, O. Defterli\ and\ O. P. Agrawal,
A central difference numerical scheme for fractional optimal control problems,
J. Vib. Control {\bf 15} (2009), no.~4, 583--597.

\bibitem{B:J:05}
D. Baleanu\ and\ F. Jarad,
Discrete variational principles for higher-order Lagrangians,
Nuovo Cimento Soc. Ital. Fis. B {\bf 120} (2005), no.~9, 931--938.

\bibitem{B:J:06}
D. Baleanu\ and\ F. Jarad,
Difference discrete variational principles,
in {\it Mathematical analysis and applications},
20--29, Amer. Inst. Phys., Melville, NY, 2006.

\bibitem{B:M:J:08}
D. Baleanu, T. Maaraba\ and\ F. Jarad,
Fractional variational principles with delay,
J. Phys. A {\bf 41} (2008), no.~31, 315403, 8 pp.

\bibitem{B:M:08}
D. Baleanu\ and\ S. I. Muslih,
Nonconservative systems within fractional generalized derivatives,
J. Vib. Control {\bf 14} (2008), no.~9-10, 1301--1311.

\bibitem{B:M:R:08}
D. Baleanu, S. I. Muslih\ and\ E. M. Rabei,
On fractional Euler-Lagrange and Hamilton equations and the fractional
generalization of total time derivative,
Nonlinear Dynam. {\bf 53} (2008), no.~1-2, 67--74.

\bibitem{Bartos}
Z. Bartosiewicz\ and\ D. F. M. Torres,
Noether's theorem on time scales,
J. Math. Anal. Appl. {\bf 342} (2008), no.~2, 1220--1226.
{\tt arXiv:0709.0400}

\bibitem{CD:Bohner:2004}
M. Bohner, Calculus of variations on time scales,
Dynam. Systems Appl. {\bf 13} (2004), 339--349.

\bibitem{Bohner:F:T}
M. Bohner, R. A. C. Ferreira\ and\ D. F. M. Torres,
\emph{Integral inequalities and their applications
to the calculus of variations on time scales},
Math. Inequal. Appl. 13 (2010), no.~3, 511--522.
{\tt arXiv:1001.3762}

\bibitem{livro:2001}
M. Bohner\ and\ A. Peterson,
{\it Dynamic equations on time scales},
Birkh\"auser, Boston, MA, 2001.

\bibitem{Cresson:Frederico:Torres}
J. Cresson, G. S. F. Frederico\ and\ D. F. M. Torres,
Constants of motion for non-differentiable quantum variational problems,
Topol. Methods Nonlinear Anal. {\bf 33} (2009), no.~2, 217--231.
{\tt arXiv:0805.0720}

\bibitem{El-Nabulsi1}
R. A. El-Nabulsi\ and\ D. F. M. Torres,
Necessary optimality conditions for fractional
action-like integrals of variational calculus
with Riemann-Liouville derivatives of order $(\alpha,\beta)$,
Math. Methods Appl. Sci. {\bf 30} (2007), no.~15, 1931--1939.
{\tt arXiv:math-ph/0702099}

\bibitem{El-Nabulsi2}
R. A. El-Nabulsi\ and\ D. F. M. Torres,
Fractional actionlike variational problems,
J. Math. Phys. {\bf 49} (2008), no.~5, 053521, 7 pp.
{\tt arXiv:0804.4500}

\bibitem{RD}
R. A. C. Ferreira\ and\ D. F. M. Torres,
Higher-order calculus of variations on time scales,
in {\it Mathematical control theory and finance},
149--159, Springer, Berlin, 2008.
{\tt arXiv:0706.3141}

\bibitem{gastao:delfim}
G. S. F. Frederico\ and\ D. F. M. Torres,
A formulation of Noether's theorem for fractional problems of the calculus of variations,
J. Math. Anal. Appl. {\bf 334} (2007), no.~2, 834--846.
{\tt arXiv:math.OC/0701187}

\bibitem{gasta1}
G. S. F. Frederico\ and\ D. F. M. Torres,
Fractional conservation laws in optimal control theory,
Nonlinear Dynam. {\bf 53} (2008), no.~3, 215--222.
{\tt arXiv:0711.0609}

\bibitem{MyID:149}
G. S. F. Frederico\ and\ D. F. M. Torres,
Fractional Noether's theorem in the Riesz-Caputo sense,
Appl. Math. Comput. (2010), in press.
DOI:10.1016/j.amc.2010.01.100
{\tt arXiv:1001.4507}

\bibitem{Zeidan}
R. Hilscher\ and\ V. Zeidan,
Calculus of variations on time scales: weak local piecewise
$C\sp 1\sb {\rm rd}$ solutions with variable endpoints,
J. Math. Anal. Appl. {\bf 289} (2004), no.~1, 143--166.

\bibitem{book:DCV}
W. G. Kelley\ and\ A. C. Peterson,
{\it Difference equations},
Academic Press, Boston, MA, 1991.

\bibitem{book:Kilbas}
A. A. Kilbas, H. M. Srivastava\ and\ J. J. Trujillo,
{\it Theory and applications of fractional differential equations},
Elsevier, Amsterdam, 2006.

\bibitem{J:B:07}
F. Jarad\ and\ D. Baleanu,
Discrete variational principles for Lagrangians linear in velocities,
Rep. Math. Phys. {\bf 59} (2007), no.~1, 33--43.

\bibitem{J:B:M:08}
F. Jarad, D. Baleanu\ and\ T. Maraaba,
Hamiltonian formulation of singular Lagrangians on time scales,
Chinese Phys. Lett. {\bf 25} (2008), no.~5, 1720--1723.

\bibitem{malina}
A. B. Malinowska\ and\ D. F. M. Torres,
Strong minimizers of the calculus of variations
on time scales and the Weierstrass condition,
Proc. Est. Acad. Sci. {\bf 58} (2009), no.~4, 205--212.
{\tt arXiv:0905.1870}

\bibitem{AD:10b}
A. B. Malinowska\ and\ D. F. M. Torres,
Natural boundary conditions in the calculus of variations,
Math. Methods Appl. Sci. (2010), in press.
DOI:10.1002/mma.1289
{\tt arXiv:0812.0705}

\bibitem{MyID:169}
A. B. Malinowska\ and\ D. F. M. Torres,
Generalized natural boundary conditions for fractional
variational problems in terms of the Caputo derivative,
Comput. Math. Appl. {\bf 59} (2010), no.~9, 3110--3116.
{\tt arXiv:1002.3790}

\bibitem{mal:tor}
A. B. Malinowska\ and\ D. F. M. Torres,
Leitmann's direct method of optimization
for absolute extrema of certain problems of the
calculus of variations on time scales,
Appl. Math. Comput. (2010), in press.
DOI: 10.1016/j.amc.2010.01.015
{\tt arXiv:1001.1455}

\bibitem{M:J:B:09}
T. Maraaba, F. Jarad\ and\ D. Baleanu,
Variational optimal-control problems with delayed arguments on time scales,
Adv. Difference Equ. {\bf 2009}, Art. ID 840386, 15 pp.

\bibitem{NataliaHigherOrderNabla}
N. Martins\ and\ D. F. M. Torres,
Calculus of variations on time scales
with nabla derivatives,
Nonlinear Anal. {\bf 71} (2009), no.~12, e763--e773.
{\tt arXiv:0807.2596}

\bibitem{Miller}
K. S. Miller\ and\ B. Ross,
Fractional difference calculus,
in {\it Univalent functions, fractional calculus,
and their applications (K\=oriyama, 1988)},
139--152, Horwood, Chichester, 1989.

\bibitem{Miller1}
K. S. Miller\ and\ B. Ross,
{\it An introduction to the fractional calculus
and fractional differential equations}, Wiley, New York, 1993.

\bibitem{M:Baleanu}
S. I. Muslih\ and\ D. Baleanu,
Fractional Euler-Lagrange equations of motion in fractional space,
J. Vib. Control {\bf 13} (2007), no.~9-10, 1209--1216.

\bibitem{Ortigueira}
M. D. Ortigueira,
Fractional central differences and derivatives,
J. Vib. Control {\bf 14} (2008), no.~9-10, 1255--1266.

\bibitem{Podlubny}
I. Podlubny, {\it Fractional differential equations},
Academic Press, San Diego, CA, 1999.

\bibitem{R:N:H:M:B:07}
E. M. Rabei, K. I. Nawafleh, R. S. Hijjawi, S. I. Muslih\ and\ D. Baleanu,
The Hamilton formalism with fractional derivatives,
J. Math. Anal. Appl. {\bf 327} (2007), no.~2, 891--897.

\bibitem{R:T:M:B:07}
E. M. Rabei, D. M. Tarawneh, S. I. Muslih\ and\ D. Baleanu,
Heisenberg's equations of motion with fractional derivatives,
J. Vib. Control {\bf 13} (2007), no.~9-10, 1239--1247.

\bibitem{Ross}
B. Ross, S. G. Samko\ and\ E. R. Love,
Functions that have no first order derivative might have
fractional derivatives of all orders less than one,
Real Anal. Exchange {\bf 20} (1994), 140--157.

\bibitem{Samko}
S. G. Samko, A. A. Kilbas\ and\ O. I. Marichev,
{\it Fractional integrals and derivatives},
Translated from the 1987 Russian original,
Gordon and Breach, Yverdon, 1993.

\bibitem{TenreiroMachado}
M. F. Silva, J. A. Tenreiro Machado\ and\ R. S. Barbosa,
Using fractional derivatives in joint control of hexapod robots,
J. Vib. Control {\bf 14} (2008), no.~9-10, 1473--1485.

\bibitem{T:L:08}
D. F. M. Torres\ and\ G. Leitmann,
Contrasting two transformation-based methods for obtaining absolute extrema,
J. Optim. Theory Appl. {\bf 137} (2008), no.~1, 53--59.
{\tt arXiv:0704.0473}

\bibitem{vanBrunt}
B. van Brunt, {\it The calculus of variations}, Springer, New York, 2004.

\end{thebibliography}
\end{document}